\documentclass[preprint,10pt]{elsarticle}
\usepackage{times}

\usepackage{cite}
\usepackage{amsmath}
\usepackage{array}

\usepackage{lineno,hyperref}
\usepackage{amsmath}
\usepackage{amssymb}
 \usepackage{amsthm}
\usepackage{mfl,color}
 \usepackage{soul}

 \usepackage{mathrsfs}

\usepackage{color}

\usepackage{tikz}
\usetikzlibrary{arrows,chains,matrix,positioning,scopes}
\def\BibTeX{{\rm B\kern-.05em{\sc i\kern-.025em b}\kern-.08em
    T\kern-.1667em\lower.7ex\hbox{E}\kern-.125emX}}

\makeatletter
\tikzset{join/.code=\tikzset{after node path={%
\ifx\tikzchainprevious\pgfutil@empty\else(\tikzchainprevious)%
edge[every join]#1(\tikzchaincurrent)\fi}}}
\makeatother

\tikzset{>=stealth',every on chain/.append style={join},
         every join/.style={->}}
\tikzstyle{labeled}=[execute at begin node=$\scriptstyle,
   execute at end node=$]

\newtheorem{Thm}{Theorem}
\newtheorem{Rmk}[Thm]{Remark}
\newtheorem{Exm}[Thm]{Example}
\newtheorem{Cor}[Thm]{Corollary}
\newtheorem{Lem}[Thm]{Lemma}
\newtheorem{Pro}[Thm]{Proposition}
\newtheorem{Def}[Thm]{Definition}

%====================================
%[section]

%====================================

\hyphenation{dealing}

\begin{document}

\begin{frontmatter}
\title{New foundations of reasoning \\ via real-valued first-order logics}

\author{Guillermo Badia}

\address{
 University of Queensland\\ 
 Brisbane, St Lucia, QLD 4072, Australia\\ 
\texttt{guillebadia89@gmail.com} }

\author{Ronald Fagin}

\address{ IBM Almaden Research Center \\ \texttt{fagin@us.ibm.com}}

\author{Carles Noguera}

\address{University of Siena\\
          San Niccol\`o, via Roma 56\\53100 Siena, Italy\\
           \texttt{carles.noguera@unisi.it}}

\begin{abstract}
Many-valued logics in general, and real-valued logics in particular, usually focus on a notion of consequence based on preservation of full truth, typically represented by the value $1$ in the semantics given in the real unit interval $[0,1]$. In a recent paper (\emph{Foundations of Reasoning with Uncertainty via Real-valued Logics},
arXiv:2008.02429v2, 2021), Ronald Fagin, Ryan Riegel, and Alexander Gray have introduced a new paradigm that allows to deal with inferences in {\em propositional} real-valued logics based on a rich class of sentences, multi-dimensional sentences, that talk about combinations of any possible truth-values of real-valued formulas. They have given a sound and complete axiomatization that tells exactly when a collection of combinations of  truth-values of formulas imply another combination of truth-values of formulas. In this paper, we extend their work to the first-order (as well as modal) logic of multi-dimensional sentences. We give axiomatic systems and prove corresponding completeness theorems, first assuming that the structures are defined over a fixed domain, and later for the logics of varying domains. As a by-product, we also obtain a 0-1 law for finitely-valued versions of these logics.

\end{abstract}

\begin{keyword} 
mathematical fuzzy logic \sep real-valued logics \sep first-order fuzzy logics \sep modal fuzzy logics \sep completeness theorems \sep 0-1 laws

%% PACS codes here, in the form: \PACS code \sep code

%% MSC codes here, in the form: \MSC code \sep code

%\MSC 
%% or \MSC[2008] code \sep code (2000 is the default)

\end{keyword}

\end{frontmatter}

%\maketitle

%\begin{keywords}
%mathematical fuzzy logic,  real-valued logics, first-order fuzzy logics, modal fuzzy logics, completeness theorems, 0-1 laws
%\end{keywords} 

%====================================================
%\section{Introduction}
%====================================================

\section{Introduction}
Typically the study of inference in many-valued logic answers the following question: given that all premises in a given set $\Gamma$ are {\em fully} true (i.e.\ take value $1$), what other formulas $\gamma$ can we see to be fully true as a consequence? This standard approach can be deemed unsatisfying because, when it comes to valid inference, it disregards almost all of the rich structure of truth-values and concentrates only on preservation of the value $1$. A natural question would be instead: what information can we infer on the assumption that the formulas in $\Gamma$ are {\em partially} true (i.e.\ take truth-values other than $1$)? What other formulas could be seen to be partially true or completely false?

In fact, the work in~\citep{fagin} poses the above question not just for single formulas but for sequences of formulas taking any combinations of truth-values considered as a single expression called a \emph{multi-dimensional sentence} (in short, an MD-sentence). More precisely, an MD-sentence is a syntactic object of the form $\langle \sigma_1, \dots, \sigma_k; S\rangle$ where $S$ (called the {\em information set}) is a set of $k$-tuples of truth-values for the sequence of formulas $\sigma_1,\dots,\sigma_k$ (called the {\em components}). The semantic intuition is that $\langle \sigma_1, \dots, \sigma_k; S\rangle$ should be true in an interpretation if the sequence of truth-values that  $\sigma_1, \dots, \sigma_k$ take in that interpretation is one of the $k$-tuples in $S$. 
As it happens, MD-sentences of the form 
$\langle \sigma; S\rangle$ where $S$ is a union of a finite number of closed intervals (and where the connectives have been given the \L ukasiewicz semantics) can be expressed in the extension of \L ukasiewicz logic known as Rational Pavelka logic. However, if, for example, $S$ is a left open interval, $(0.5,1]$, then it was shown in~\citep{fagin} that Rational Pavelka logic is unable to deal with this.
\footnote{Pavelka introduced in~\citep{Pavelka} a formal system of real-valued logic that later was found to be equivalent to the expansion of \L ukasiewicz logic by enriching the language with a truth-constant $\overline{r}$ for each real number in $r \in [0,1]$ with certain additional axioms and proved a corresponding completeness theorem. Nov\'ak extended Pavelka's logic to a first-order language thus obtaining a corresponding expansion of \L ukasiewicz first-order logic~\citep{Novak:CompletenessFirstOrder}. The enriched language of these systems allows to write sentences of the form $\overline{r} \to \varphi$ and $\varphi \to \overline{r}$ and hence allows to stipulate in a syntactical manner the precise truth-value $r$ that the formula $\varphi$ has to take in a model, but of course in a much more restricted way that in MD-sentences, as just discussed above. Finally, it is worth mentioning that H\'ajek greatly simplified Pavelka's and Nov\'ak's approach in~\citep{haj} by showing that essentially the same results could be obtained in a countable language that included truth-constants only for the rational numbers in $[0,1]$.} 

The goal of~\citep{fagin} was to axiomatize inference genuinely involving many truth-values. The authors indeed provided an axiomatization in terms of MD-sentences in a parametrized way that captures essentially all real-valued logics. In the present article, we generalize the work in~\citep{fagin} to the first-order and modal contexts. 

The article is arranged as follows. First, in \S~\ref{s:Propositional}, we give a fast overview of the necessary notions and results that we borrow from the propositional case studied in~\citep{fagin}. In \S~\ref{sec1} we study the first-order (as well as modal) logic of multi-dimensional sentences (generalizing  the definition of~\citep{fagin}) when the models considered all have the same fixed domain (which may be of any fixed cardinality, either finite or infinite). The key result is a completeness result that follows the strategy of that in~\citep{fagin} for the propositional case.  In \S~\ref{sec2}  we show how our approach leads to parameterized axiomatizations of the valid finitary inferences of many prominent first-order real-valued logics. Since this includes several logics that are not recursively enumerable for validity, our system in general does not yield a recursive enumeration of theorems. In \S \ref{sec3}, we prove a 0-1 law for finitely-valued versions of the logics dealt with in \S~\ref{sec1}. Finally, in \S~\ref{sec4} we remove the restriction of a fixed domain and provide a completeness theorem for the first-order  logic of multi-dimensional sentences on arbitrary domains.

\section{The propositional case: an overview}\label{s:Propositional}

This section presents a brief summary of the key results and notions from~\citep{fagin}. Following that article, we take a (propositional) \emph{multi-dimensional sentence} (in symbols, an MD-sentence) to be an expression  of the form $\langle \sigma_1, \dots, \sigma_k; S\rangle$ where $S \subseteq [0,1]^k$. For a fixed $k$, we may speak of $k$-dimensional sentences.

The semantics of MD-sentences is as follows. By a {\em model} we mean an assignment  $\model{M}$ from atomic sentences of a propositional language $\mathcal{L}$ to truth-values from $[0,1]$. The usual real-valued logics (\L ukasiewicz, Product, G\"odel, etc.) all have inductive definitions indicating how to assign values to all formulas and hence the notion of the value of an arbitrary formula in the language $\mathcal{L}$ in a given model $\model{M}$ is well-defined. Next, for an  MD-sentence $\langle \sigma_1, \dots, \sigma_k; S\rangle$, we say that $\model{M}$ \emph{satisfies} this sentence (in symbols, $\model{M}\models \langle \sigma_1, \dots, \sigma_k; S\rangle$) if $\tuple{s_1, \dots s_k} \in S$ where $s_i$ ($1 \leq i \leq k$) is the value in $\model{M}$ of $\sigma_i$.

Given these definitions one can consider Boolean combinations of MD-sentences. For example, take $\gamma_1 := \langle \sigma^1_1, \dots, \sigma^1_n; S_1\rangle$ and $\gamma_2 := \langle \sigma^2_1, \dots, \sigma^2_m; S_2\rangle$. Then, we may say that $\model{M}\models \gamma_1 \wedge \gamma_2$ iff  $\model{M}\models \gamma_1$ and $\model{M}\models \gamma_2$. An interesting result from~\citep{fagin} is that MD-sentences are closed under Boolean combinations, in the sense that for any Boolean combination of such sentences there is an MD-sentence equivalent to such combination. Hence, the collection of MD-sentences is expressively quite robust.

\begin{Exm}
An easy example of a valid MD-sentence in, say, G\"odel semantics, is the 3-dimensional sentence $\langle A, B, A\vee B; S\rangle$ where $S$ is the set of all triples $\tuple{s_1, s_2, s_3}$ where $s_1, s_2 \in [0,1]$ and $s_3$ is the maximum of the set $\{s_1, s_2\}$.
\end{Exm}

Now it is natural to try to build a calculus that will capture exactly the valid finitary inferences involving MD-sentences. This is what we do next.

\paragraph{Axioms} We have only one axiom schema:
\begin{itemize}
\item[(1)] $\tuple{\sigma_1, \ldots, \sigma_k, [0,1]^k }$.
\end{itemize}
\paragraph{Inference rules} 

\begin{itemize}
\item[(2)] From $\tuple{\sigma_1, \ldots, \sigma_k; S}$ infer $\tuple{\sigma_{\pi (1)}, \ldots, \sigma_{\pi (k)}; S'}$,
\end{itemize}
where $S' = \{\tuple{s_{\pi (1)}, \ldots, s_{\pi (k)}} \mid \tuple{s_{1}, \ldots, s_{k}} \in S\}$ and $\pi$ is a permutation of $1, \ldots, k$.

\begin{itemize}
\item[(3)] From  $\tuple{\sigma_1, \ldots, \sigma_k; S}$ infer $$\langle\sigma_1, \ldots, \sigma_k, \sigma_{k+1}, \ldots, \sigma_m; S \times  [0,1]^{m-k}\rangle.$$
\end{itemize}

\begin{itemize}
\item[(4)] From $\tuple{\sigma_1, \ldots, \sigma_k; S_1}$ and  $\tuple{\sigma_1, \ldots, \sigma_k; S_2}$ infer $\tuple{\sigma_1, \ldots, \sigma_k; S_1 \cap S_2}.$
\end{itemize}
\begin{itemize}
\item[(5)] For  $0< r < k$, from $\tuple{\sigma_1, \ldots, \sigma_k; S}$  infer $\tuple{\sigma_1, \ldots, \sigma_{k-r}; S'}$, where $S'= \{\tuple{s_1, \ldots, s_{k-r}} \mid \tuple{s_1, \ldots, s_{k}} \in S\}$.
\end{itemize}
\begin{itemize}
\item[(6)] From $\tuple{\sigma_1, \ldots, \sigma_k; S}$ infer $\tuple{\sigma_1, \ldots, \sigma_k; S'}$, when $S \subseteq S'$.
\end{itemize}

Finally, before we introduce the last rule, let us define a piece of  notation.  For any $j$-ary connective $\circ$,  from a real-valued logic and real numbers $s_1, \ldots, s_j$ from $[0,1]$ we can define the function $\circ (s_1, \ldots, s_j)$  giving as output what the connective $\circ$  indicates in a given real-valued logic for the values $s_1, \ldots, s_j$. 

Before we introduce the last rule, we need an auxiliary notion. Given an MD-sentence $\tuple{\sigma_1, \ldots, \sigma_k; S}$, we say that a tuple $\tuple{s_1, \ldots, s_k} \in S$ is {\em good} if $s_m = \circ (s_{m_1}, \ldots,   s_{m_j})$  whenever $\sigma_m = \circ(\sigma_{m_1}, \ldots, \sigma_{m_j})$ (for any $m$-ary connective $\circ$ and for any $m$). Notice that this is a local property of each tuple in $S$, in the sense that it does not depend on what other tuples are in the information set. Now, the last inference rule is
\begin{itemize}
\item[(7)] From $\tuple{\sigma_1, \ldots, \sigma_k; S}$  infer $\tuple{\sigma_1, \ldots, \sigma_k; S'}$, where $S'$ is the set of good tuples in $S$.  
\end{itemize}

A \emph{proof} of an MD-sentence $\gamma$ from a set $\Gamma$ of MD-sentences in this system  consists, as usual, of a finite sequence of MD-sentences such that the last member is $\gamma$ and every element of the sequence is either an axiom, one of the member of $\Gamma$, or it follows from previous elements by one of the inference rules. We write $\Gamma \vdash \gamma$ to indicate that there exists a proof of $\gamma$ from $\Gamma$. 

The central result from~\citep{fagin} states that if $\Gamma$ is a \emph{finite} set of MD-sentences, we have that $\Gamma \vdash \gamma$ is equivalent to $\Gamma \vDash \gamma$. It is noteworthy that this technique provides a  parameterized way of building calculi for MD-sentences with semantics for the standard real-valued logics  (where the parameters give a particular semantic meaning to the connectives of the language); special extra steps need to be taken for the logic of probabilities, as discussed in~\citep{fagin}. The restriction to finite sets is necessary due to the finitary character of \L ukasiewicz logic. Finally, in~\citep{fagin} a decision procedure for validity in this system of MD-sentences for G\"odel and \L ukasiewicz semantics is introduced. Furthermore, the  algorithm of the procedure is implemented and tested on various interesting cases.

\section{The logic of a fixed domain}\label{sec1}
Throughout this section, let $M$ be  any  fixed set, \emph{finite} or \emph{infinite}. Observe that for finite fixed domains, by means of eliminating quantifiers (turning a universal quantifier into a big conjunction and turning an existential quantifier into a big disjunction), we could use an approach that essentially reduces the problem to what was done in~\citep{fagin}. We work with a first-order relational vocabulary  $\tau$ to simplify things (but everything we do can be adjusted to accommodate function and constant symbols).

\subsection{First-order case (the logic of a fixed domain)}
This part is devoted to provide an axiomatization of  the logic of a fixed domain $M$ (of any cardinality), in the sense of the valid inferences over all models with domain $M$. We will construct the set $\mathrm{MD}(M)$ of \emph{MD-sentences} with domain $M$.
Let $\mathrm{MD}(M)$ contain all sentences of the form $\tuple{\f_1(\overline{x}_{\f_1}), \ldots, \f_k(\overline{x}_{\f_k}); S}$
%\footnote{} 
where $\overline{x}_{\f_i}:= x_{i_{1}}, \ldots, x_{i_{n_i}}$, and $S \subseteq [0,1]^{M^{n_1}} \times \ldots \times [0,1]^{M^{n_k}}$\!. In the expression $\f_i(\overline{x})$, the free variables of $\f_i$ (if any) will be exactly those in the list $\overline{x}_{\f_i}$.
%, which might well be empty.  
When $\overline{x}_{\f_i}$  is empty, $\f_i$ is a sentence and what it gets assigned in a given $S$ is simply a nullary function, in other words, an element of $[0,1]$, as in the propositional case. If none of the formulas $\f_i$ in the MD-sentence $\tuple{\f_1, \ldots, \f_k; S}$ contains free variables, then the situation is exactly as in the propositional case~\citep{fagin} and there is no need to mention in $S$  the set $M$.

\begin{Exm}\label{Ex1} Take a vocabulary $\tau$ with only two unary predicates $P$ and $U$. Then, we can build the sentence $\tuple{Px, \forall x\, Ux; S}$ where $S =\{\tuple{f, r} \mid \text{$r \in [0.5, 0.8)$}$, $f$ is a mapping with domain $M$ and range included in the set $[0,1]$\}. 
Intuitively, we want this sentence to be satisfied in a model $\model{M}$ with domain $M$ if the truth-value of\/ $\forall x\, Ux$ is a real number in the interval $[0.5, 0.8)$ and the interpretation of the predicate $P$ is a mapping from $M$ into $[0,1]$.
\end{Exm}
\begin{Def}\label{model}
\emph{A real-valued first-order \emph{model}  $\model{M}$ is a structure with domain $M$ and interpretations for the predicates of our vocabulary $\tau$ being mappings from Cartesian products of $M$ into $[0,1]$. More precisely, for an $n$-ary predicate $R$, its interpretation in $\model{M}$ is a mapping $R_\model{M}\colon M^n \longrightarrow [0,1]$.} 
\end{Def}
Inductively, using the semantics of the real-valued logic in question, one can define the truth-value of any formula for a sequence of elements $\overline{a}$ from $M$ and write it as $\semvalue{\f[\overline{a}]}_\model{M}$:

\vspace{0.5ex} 

\begin{itemize}
%\item[] $\|x\|_{\mathbf{M},v}=v(x)$;
%
%\item[] $\|F(t_1,\ldots,t_n)\|_{\mathbf{M},v}=F_{\mathbf{M}}(\|t_1\|_{\mathbf{M},v},\ldots,\|t_n\|^{{\alg{A}}}_{\mathbf{M},v})$,  for each $F\in Func_{\tau}$;

\item $\|P[\overline{a}]\|_{\model{M}}=P_{\model{M}}(\overline{a})$, for each $P\in Pred_{\tau}$;

\item $\|\circ( \f_1, \ldots, \f_n)[\overline{a}]\|_{\model{M}}= \circ(\|\f_1[\overline{a}]\|_{\model{M}}, \ldots, \|\f_n[\overline{a}]\|_{\model{M}})$, for $n$-ary connective $\circ$;

%\item[] $\|\overline{a}\|^{{\alg{A}}}_{\mathbf{M},v}=\overline{a}^\alg{A}$;

\item $\semvalue{(\forall x)\f [\overline{a}]}_{\model{M}}=\inf\{\|\f [\overline{a}, e]\|_{\model{M}}\mid e\in M\}$;

\item $\|(\exists x)\f [\overline{a}]\|_{\model{M}}=\sup\{\|\f [\overline{a}, e]\|_{\model{M}}\mid e\in M\}$.
\end{itemize}

\vspace{0.5ex} 

Whenever the vocabulary includes the equality symbol $\approx$, its semantics is defined in the following way:

\vspace{0.5ex} 

\begin{itemize}
\item $\| (x\approx y) [d,e] \|_{\model{M}}= 1$ iff $d =e$, for any $d,e \in M$.
\item $\| (x\approx y) [d,e] \|_{\model{M}}= 0$ iff $d \neq e$, for any $d,e \in M$.
\end{itemize}

\vspace{0.5ex} 
The definition of the truth-value of a quantified formula as the infimum or the supremum of the truth-values of its instances is customary in many-valued logics as a natural generalization of the semantics of quantifiers in classical logic.

A formula $\f(x_1, \ldots, x_n)$ can be said to be \emph{interpreted} in the model $\model{M}$ by the mapping $f_\f\colon M^n \longrightarrow [0, 1]$ defined as $\tuple{a_1, \ldots, a_n} \mapsto \semvalue{\f[a_1, \ldots, a_n]}_\model{M}$ (we also say that $\f(x_1, \ldots, x_n)$ \emph{defines} the mapping $f_\f$ in the model $\model{M}$).
 
Next,  take a sentence $\tuple{\f_1(\overline{x}_{\f_1}), \ldots, \f_k(\overline{x}_{\f_k}); S}$.
Then, we may write  $$\model{M} \models \tuple{\f_1(\overline{x}_{\f_1}), \ldots, \f_k(\overline{x}_{\f_k}); S}$$ if the formulas $\f_1(\overline{x}_{\f_1}), \ldots, \f_k(\overline{x}_{\f_k})$ respectively define mappings $f_1, \ldots, f_k$ in the model $\model{M}$ and $\tuple{f_1, \ldots, f_k} \in S$. Notice that, if any of the $\f_i$s is a sentence, then the corresponding $f_i$ is a constant function.
If all the $\f_i$s are sentences, this definition basically boils down to what appears in~\citep{fagin}.
 
We introduce now a proof system associated to the domain $M$, called the {\em MD-system of $M$}, by considering the axioms and inference rules given in~\citep{fagin} for the propositional case and  modifying only what is needed:

\paragraph{Axioms} We have only one axiom schema:
\begin{itemize}
\item[(1)] $\tuple{\f_1(\overline{x}_{\f_1}), \ldots, \f_k(\overline{x}_{\f_k}), [0,1]^{M^{n_1}} \times \ldots \times [0,1]^{M^{n_k}}}$ for all formulas\\ $\f_1(\overline{x}_{\f_1}), \ldots, \f_k(\overline{x}_{\f_k})$.
\end{itemize}
\paragraph{Inference rules} 

\begin{itemize}
\item[(2)] From $$\tuple{\f_1(\overline{x}_{\f_1}), \ldots, \f_k(\overline{x}_{\f_k}); S}$$ infer $$\tuple{\f_{\pi (1)}(\overline{x}_{\f_{\pi (1)}}), \ldots, \f_{\pi (k)}(\overline{x}_{\f_{\pi (k)}}); S'},$$ where $S' = \{\tuple{f_{\pi (1)}, \ldots, f_{\pi (k)}} \mid \tuple{f_{1}, \ldots, f_{k}} \in S\}$ and $\pi$ is a permutation of $1, \ldots, k$.
\end{itemize}

\begin{itemize}
\item[(3)] From $$\tuple{\f_1(\overline{x}_{\f_1}), \ldots, \f_k(\overline{x}_{\f_k}); S}$$ infer $$\langle\f_1(\overline{x}_{\f_1}), \ldots, \f_k(\overline{x}_{\f_k}), \f_{k+1}(\overline{x}_{\f_{k+1}}), \ldots, \f_m(\overline{x}_{\f_m}); S \times [0,1]^{M^{n_{k+1}}} \times \ldots \times [0,1]^{M^{n_{m}}}\rangle.$$
\end{itemize}

\begin{itemize}
\item[(4)] From $$\tuple{\f_1(\overline{x}_{\f_1}), \ldots, \f_k(\overline{x}_{\f_k}); S_1}$$ and $$\tuple{\f_1(\overline{x}_{\f_1}), \ldots, \f_k(\overline{x}_{\f_k}); S_2}$$ infer $$\tuple{\f_1(\overline{x}_{\f_1}), \ldots, \f_k(\overline{x}_{\f_k}); S_1 \cap S_2}.$$
\end{itemize}
\begin{itemize}
\item[(5)] For  $0< r < k$, from $$\tuple{\f_1(\overline{x}_{\f_1}), \ldots, \f_k(\overline{x}_{\f_k}); S}$$  infer $$\tuple{\f_1(\overline{x}_{\f_1}), \ldots, \f_{k-r}(\overline{x}_{\f_{k-r}}); S'},$$ where $S'= \{\tuple{f_1, \ldots, f_{k-r}} \mid \tuple{f_1, \ldots, f_{k}} \in S\}$.
\end{itemize}
\begin{itemize}
\item[(6)] From $$\tuple{\f_1(\overline{x}_{\f_1}), \ldots, \f_k(\overline{x}_{\f_k}); S}$$  infer $$\tuple{\f_1(\overline{x}_{\f_1}), \ldots, \f_k(\overline{x}_{\f_k}); S'}$$ where $S \subseteq S'$.
\end{itemize}

Finally, before we introduce the last rule, let us define a piece of  notation. Consider an arbitrary domain $M$ and functions $f_1, \ldots, f_j$  from some Cartesian products of $M$ into $[0,1]$. Then, for any $j$-ary connective $\circ$ from a real-valued logic, we can define the function $\circ (f_1, \ldots, f_j)$ as taking arguments componentwise as indicated by the output of the $f_i$s ($i \in \{1, \ldots, j\}$) and giving as output what $\circ$  indicates. 
Also, we need to generalize also the notion of good tuple. Indeed, given an MD-sentence $\tuple{\f_1(\overline{x}_{\f_1}), \ldots, \f_k(\overline{x}_{\f_k}); S}$, we say that a tuple $\tuple{f_1, \ldots, f_{k}} \in S$ is {\em good} if
\begin{itemize}
\item[(a)] $f_m = \circ (f_{m_1}, \ldots,   f_{m_j})$ whenever $\f_m(\overline{x}_{\f_m}) = \circ(\f_{m_1}(\overline{x}_{\f_{m_1}}), \ldots, \f_{m_j}(\overline{x}_{\f_{m_j}})),$
\item[(b)]   $f_i(e_{1}, \ldots, e_{n_j})= \inf\{f_j(e_{1}, \ldots, e_{n_j}, e) \mid e \in M\}$ whenever\\ $\f_i(\overline{x}_{\f_i}) = \forall y\, \f_j(\overline{x}_{\f_j}),$ for all $e_{1}, \ldots, e_{n_j} \in M^{n_j}$, 
\item[(c)]   $f_i(e_{1}, \ldots, e_{n_j})= \sup\{f_j(e_{1}, \ldots, e_{n_j}, e) \mid e \in M\}$   whenever\\ $\f_i(\overline{x}_{\f_i}) = \exists y\, \f_j(\overline{x}_{\f_j})$, for all $e_{1}, \ldots, e_{n_j} \in M^{n_j}$.
\end{itemize}

\begin{itemize}
\item[(7)] From $$\tuple{\f_1(\overline{x}_{\f_1}), \ldots, \f_k(\overline{x}_{\f_k}); S}$$  infer $$\tuple{\f_1(\overline{x}_{\f_1}), \ldots, \f_k(\overline{x}_{\f_k}); S'},$$ where $S'$ is the set of good tuples in $S$. 
\end{itemize}

A \emph{proof} of an MD-sentence $\gamma$ from a set $\Gamma$ of MD-sentences in this system  consists, as usual, of a finite sequence of MD-sentences such that the last member is $\gamma$ and every element of the sequence is either an axiom, one of the member of $\Gamma$, or it follows from previous elements by one of the inference rules. We write $\Gamma \vdash_M \gamma$ to indicate that there exists a proof of $\gamma$ from $\Gamma$. 

%Now we have the following lemma:

\begin{Lem}\label{min} Let 
$\tuple{\f_1(\overline{x}_{\f_1}), \ldots, \f_k(\overline{x}_{\f_k}); S}$ be the premise of Rule~(7) and assume that $G=\{\f_1(\overline{x}_{\f_1}), \ldots, \f_k(\overline{x}_{\f_k})\}$ is closed under subformulas in the usual sense. Then, the conclusion $\tuple{\f_1(\overline{x}_{\f_1}), \ldots, \f_k(\overline{x}_{\f_k}); S'}$ is \emph{minimized}, i.e., whenever  $\tuple{f_1, \ldots, f_k}\in S'$, there is a model $\model{M}$ (with domain $M$) of  $\tuple{\f_1(\overline{x}_{\f_1}), \ldots, \f_k(\overline{x}_{\f_k}); S'}$  such that for $1\leq i \leq k$ the interpretation of $\f_i(\overline{x}_{\f_i})$ is $f_i$.
\end{Lem}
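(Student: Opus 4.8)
The plan is to show that any $\langle f_1,\dots,f_k\rangle\in S'$ is actually \emph{realized} by some model, and moreover that this model satisfies the whole MD-sentence $\langle \f_1(\overline{x}_{\f_1}),\dots,\f_k(\overline{x}_{\f_k}); S'\rangle$. The idea is to build $\model{M}$ by reading off the interpretations of the atomic predicates directly from the functions $f_i$ assigned to the atomic formulas among $\f_1,\dots,\f_k$. First I would note that, since $G$ is closed under subformulas, for every subformula $\psi$ of some $\f_i$ there is an index $j\le k$ with $\f_j=\psi$ (up to the naming of free variables); in particular every atomic subformula $P(\overline{x})$ occurring anywhere in $G$ appears as some $\f_j$. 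For each $n$-ary predicate $P$ occurring in $G$, I would define $P_\model{M}\colon M^n\to[0,1]$ to be the function $f_j$ associated with that atomic formula $\f_j=P(\overline{x})$; for predicates not occurring in $G$, pick any interpretation (say constantly $0$), and if equality is in the vocabulary, interpret it in the forced way (this is consistent because a good tuple assigns to an atomic formula $x\approx y$ exactly the $0$–$1$ function demanded by the semantics — here one needs to be slightly careful, and I would simply remark that equality-atoms, if present in $G$, are automatically forced by goodness to the correct function, and if absent there is nothing to check).

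The key step is then a bottom-up induction on the structure of formulas in $G$: I claim that for every $\f_i$, the mapping it \emph{defines} in the model $\model{M}$ just constructed equals $f_i$. For atomic $\f_i$ this holds by construction. For $\f_i=\circ(\f_{m_1},\dots,\f_{m_j})$ with the $\f_{m_\ell}$ being genuine subformulas (hence, by subformula-closure, appearing among the $\f$'s), the inductive hypothesis gives that each $\f_{m_\ell}$ defines $f_{m_\ell}$, and clause (a) of the definition of \emph{good} gives $f_i=\circ(f_{m_1},\dots,f_{m_j})$, which is exactly the value $\f_i$ defines by the inductive clause for connectives. For $\f_i=\forall y\,\f_j(\overline{x}_{\f_j})$, again $\f_j\in G$ by subformula-closure, the inductive hypothesis gives that $\f_j$ defines $f_j$, and clause (b) gives $f_i(\overline a)=\inf_{e\in M}f_j(\overline a,e)$, which is precisely $\semvalue{(\forall y)\f_j[\overline a]}_\model{M}$; the existential case is symmetric using clause (c). This completes the induction, so each $\f_i$ defines $f_i$ in $\model{M}$.

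It remains to observe that $\langle f_1,\dots,f_k\rangle\in S'$: this is given, since we started with a tuple in $S'$ (the set of good tuples of $S$). Hence $\model{M}\models\langle\f_1(\overline{x}_{\f_1}),\dots,\f_k(\overline{x}_{\f_k}); S'\rangle$ by the definition of satisfaction, and the interpretation of each $\f_i$ is $f_i$, which is exactly the assertion that the conclusion of Rule~(7) is minimized.

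The main obstacle I anticipate is purely bookkeeping: the list $\f_1,\dots,\f_k$ names free variables explicitly, so ``$G$ is closed under subformulas'' has to be read up to the appropriate renaming of free variables and insertion of the missing argument place, and one must check that clauses (a)–(c) of goodness quantify over exactly the right tuples $\overline e$ so that they match the inductive semantic clauses verbatim; the indices $n_j$, $n_i$ in clauses (b)–(c) should be read consistently (the arity of $\f_j$ being one more than that of $\f_i$). A secondary, genuinely mathematical point requiring a word of care is well-definedness of the atomic interpretations: if the same atomic formula $P(\overline{x})$ were to appear twice in the list $\f_1,\dots,\f_k$ with different associated functions, the construction would break — but then no good tuple could exist at all (goodness, via clause (a) applied to the trivial identity or simply by the consistency of definition, would be violated), or, more simply, one may first apply Rule~(4)-style reasoning or just assume without loss of generality that the list has no repeated atomic entries; I would handle this with a brief remark rather than a formal argument. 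None of these points is deep, but they are where the proof has to be written carefully rather than waved through.
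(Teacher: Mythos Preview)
Your proposal is correct and follows essentially the same approach as the paper: extract the interpretations of the atomic predicates from the subsequence of $\tuple{f_1,\dots,f_k}$ corresponding to atomic formulas (available because $G$ is subformula-closed), build $\model{M}$ from these, and then observe that the goodness conditions force every $\f_i$ to define $f_i$ in $\model{M}$. The paper's proof is much terser---it simply says that Rule~(7) ``is designed to select only those sequences $\tuple{f_1,\ldots,f_k}$ that respect the semantics of the underlying real-valued logic''---whereas you spell out the structural induction and flag the bookkeeping issues (variable naming, repeated atomic entries) that the paper leaves implicit; these elaborations are sound and do not diverge from the paper's argument.
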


\begin{proof} Assume that $\tuple{f_1, \ldots, f_k}\in S'$. Since $G$ is closed under subformulas, there is a subsequence of $\tuple{f_1, \ldots, f_k}$ that determines interpretations on the domain $M$ for the atomic formulas appearing in $G$, i.e., interpretations for the predicates of $\tau$. But this subsequence then defines a model $\model{M}$ based on the domain $M$ where the interpretations of $\f_1(\overline{x}_{\f_1}), \ldots, \f_k(\overline{x}_{\f_k})$ are as indicated by $\tuple{f_1, \ldots, f_k}$. This is because Rule~(7) is designed to select only those sequences  $\tuple{f_1, \ldots, f_k}$ that respect the semantics of the underlying real-valued logic.
\end{proof}

\begin{Rmk}\emph{
Lemma~\ref{min} plays an important role in the completeness argument in this general framework. Roughly speaking, it relies on the fact that the set $S'$ can encode a  model for a series of formulas $\f_1(\overline{x}_{\f_1}), \ldots, \f_k(\overline{x}_{\f_k})$ with domain $M$ by a sequence of interpretations to the finite list of predicates appearing in such formulas in a way that is consistent with the semantics of the underlying real-valued logic.  It is not difficult to see that, for a finite vocabulary $\tau$, we can find a set $S$ encoding all possible models with domain $M$. For example if $\tau$ is the set $\{P_1, \dots, P_k\}$ of predicates, we can take $S$ to be the set of all sequences $\tuple{f_1, \ldots, f_k}$ of possible interpretations of the predicates from our list on the domain $M$.}
\end{Rmk}

Similarly to~\citep[Lemma 5.3]{fagin} we obtain:

\begin{Lem}\label{equiv}The conclusion and premises of rules (2), (3), (4), and (7) are logically equivalent.

\end{Lem}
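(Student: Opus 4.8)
The plan is to prove logical equivalence by simply unwinding the definition of satisfaction for MD-sentences over domain $M$. Recall that, by Definition~\ref{model} together with the inductive clauses for $\semvalue{\cdot}_{\model{M}}$, \emph{every} component $\f_i(\overline{x}_{\f_i})$ defines a unique mapping $f_i$ in any model $\model{M}$, so ``$\model{M}\models\tuple{\f_1(\overline{x}_{\f_1}),\ldots,\f_k(\overline{x}_{\f_k});S}$'' is equivalent to the single condition $\tuple{f_1,\ldots,f_k}\in S$, where $f_i$ is the mapping defined by $\f_i(\overline{x}_{\f_i})$. In each of the four rules the component formulas on the two sides agree up to a permutation (Rule~(2)) or up to appending further formulas (Rule~(3)), so a given model assigns them the same mappings on both sides, and the whole argument reduces to checking that membership of the relevant tuple of mappings transfers in both directions between the two information sets.

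For Rule~(2), the reindexing $\tuple{f_1,\ldots,f_k}\mapsto\tuple{f_{\pi(1)},\ldots,f_{\pi(k)}}$ is a bijection between $S$ and $S'$ with inverse given by $\pi^{-1}$, and in any model the sequence $\f_{\pi(1)}(\overline{x}_{\f_{\pi(1)}}),\ldots,\f_{\pi(k)}(\overline{x}_{\f_{\pi(k)}})$ defines exactly the permuted sequence of mappings; hence $\model{M}$ satisfies the premise iff it satisfies the conclusion. For Rule~(3), the key point is that in every model each appended component $\f_{k+i}(\overline{x}_{\f_{k+i}})$ defines \emph{some} mapping $f_{k+i}\in[0,1]^{M^{n_{k+i}}}$, so the extra coordinates of $S\times[0,1]^{M^{n_{k+1}}}\times\cdots\times[0,1]^{M^{n_m}}$ impose no constraint: $\tuple{f_1,\ldots,f_m}$ lies in the larger set precisely when $\tuple{f_1,\ldots,f_k}\in S$. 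For Rule~(4), $\model{M}$ satisfies both premises iff $\tuple{f_1,\ldots,f_k}\in S_1$ and $\tuple{f_1,\ldots,f_k}\in S_2$, i.e.\ iff $\tuple{f_1,\ldots,f_k}\in S_1\cap S_2$, which is exactly satisfaction of the conclusion.

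The only rule carrying any content is Rule~(7). One direction is immediate: since $S'\subseteq S$, every model of the conclusion is a model of the premise. For the converse, suppose $\model{M}\models\tuple{\f_1(\overline{x}_{\f_1}),\ldots,\f_k(\overline{x}_{\f_k});S}$, so the components define mappings $f_1,\ldots,f_k$ with $\tuple{f_1,\ldots,f_k}\in S$; I would then observe that this tuple is automatically \emph{good}. Indeed, conditions (a), (b), (c) in the definition of goodness are literally the inductive clauses in the definition of $\semvalue{\cdot}_{\model{M}}$ for connectives, for $\forall$, and for $\exists$ respectively, so they are satisfied by the mappings that any genuine model assigns to the components. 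Hence $\tuple{f_1,\ldots,f_k}\in S'$ and $\model{M}$ satisfies the conclusion. Note that, unlike in Lemma~\ref{min}, no closure under subformulas is needed here, because we only use that goodness is \emph{necessary} for a tuple to arise from a model, not that it is sufficient.

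I do not expect a genuine obstacle: the whole lemma is a direct verification. The one step that requires any care is the converse direction of Rule~(7), namely the remark that the conditions defining a good tuple are exactly the semantic clauses of the underlying real-valued logic, so that every tuple coming from an honest model is good; this is precisely what the notion of good tuple was engineered to record, and it is the crux of why Rule~(7) is sound and invertible.
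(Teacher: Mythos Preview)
Your proposal is correct and follows essentially the same approach as the paper's own proof. The paper sketches only Rule~(7)---arguing, just as you do, that the tuple of mappings defined by a genuine model automatically respects the semantic clauses and is therefore good, and handling the converse via $S'\subseteq S$ (phrased there as ``soundness of Rule~(6)'')---and explicitly leaves Rules~(2), (3), (4) as exercises, which you have filled in straightforwardly.
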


\begin{proof} We will sketch the argument for Rule~(7) and leave the rest as exercises to the reader. Let $\model{M}$ be a model such that
$$\model{M} \models \tuple{\f_1(\overline{x}_{\f_1}), \ldots, \f_k(\overline{x}_{\f_k}); S}.$$ But then the interpretations $f_1, \ldots, f_{k}$ of the formulas  $\f_1(\overline{x}_{\f_1}), \ldots, \f_k(\overline{x}_{\f_k})$ in the model $\model{M}$ respect the semantics of the connectives and quantifiers according to the real-valued logic in question. Since $\tuple{f_1, \ldots, f_{k}}\in S$ by hypothesis, we must have  that $\tuple{f_1, \ldots, f_{k}}\in S'$ where $S'$ is as in Rule~(7). Hence, 
 $$\model{M} \models \tuple{\f_1(\overline{x}_{\f_1}), \ldots, \f_k(\overline{x}_{\f_k}); S'},$$
 as desired. On the other hand, if $$\model{M} \models \tuple{\f_1(\overline{x}_{\f_1}), \ldots, \f_k(\overline{x}_{\f_k}); S'},$$ given the soundness of Rule~(6), it follows that 
 $$\model{M} \models \tuple{\f_1(\overline{x}_{\f_1}), \ldots, \f_k(\overline{x}_{\f_k}); S}.\qedhere$$
\end{proof}

The following lemma is straightforward to show.

\begin{Lem}\label{min2}
Minimization is preserved by the rules (2) and (4), i.e.\ if the premises of the rules are minimized, then their conclusions are too.
\end{Lem}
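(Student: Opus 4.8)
The plan is to unwind the definition of minimization and, for each of the two rules, to transport a witnessing model for a tuple in the conclusion's information set directly from a witness supplied by one of the premises; no new model has to be built from scratch.

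For Rule~(2) I would fix an arbitrary $\tuple{g_1, \ldots, g_k}$ in the conclusion's information set $S'$, use the definition of $S'$ to pull it back to a tuple $\tuple{f_1, \ldots, f_k} \in S$ with $g_i = f_{\pi(i)}$ for all $i$, invoke minimization of the premise to obtain a model $\model{M}$ with domain $M$ in which each $\f_j(\overline{x}_{\f_j})$ is interpreted by $f_j$, and then observe that this same $\model{M}$ interprets $\f_{\pi(i)}(\overline{x}_{\f_{\pi(i)}})$ by $f_{\pi(i)} = g_i$. Finally one checks that $\model{M} \models \tuple{\f_{\pi(1)}(\overline{x}_{\f_{\pi(1)}}), \ldots, \f_{\pi(k)}(\overline{x}_{\f_{\pi(k)}}); S'}$, which amounts to $\tuple{f_{\pi(1)}, \ldots, f_{\pi(k)}} \in S'$, and this holds by construction of $S'$ from $\tuple{f_1,\ldots,f_k}\in S$.

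For Rule~(4) I would fix $\tuple{f_1, \ldots, f_k} \in S_1 \cap S_2$, apply minimization of (say) the first premise to this tuple — legitimate since it lies in $S_1$ — to get a model $\model{M}$ with domain $M$ interpreting each $\f_i(\overline{x}_{\f_i})$ by $f_i$, and note that $\model{M} \models \tuple{\f_1(\overline{x}_{\f_1}), \ldots, \f_k(\overline{x}_{\f_k}); S_1 \cap S_2}$ then follows at once from the definition of satisfaction together with the hypothesis $\tuple{f_1,\ldots,f_k}\in S_1\cap S_2$. So $\model{M}$ is the required witness and the conclusion is minimized.

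I do not expect a real obstacle here: once the definitions are aligned the argument is essentially bookkeeping. The two points that deserve a moment's attention are, first, matching the arities of the permuted components in Rule~(2) so that $g_i = f_{\pi(i)}$ is a genuine identity of maps $M^{n_{\pi(i)}} \to [0,1]$; and, second, recalling that ``minimized'' demands a witness model not merely reproducing the prescribed interpretations but also \emph{satisfying} the MD-sentence in question — which is precisely why in each case one explicitly verifies membership of the relevant tuple in $S'$ (resp.\ $S_1 \cap S_2$), a verification that is immediate from how the conclusion's information set is constructed from those of the premises.
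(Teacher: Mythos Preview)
Your proposal is correct and is precisely the straightforward unwinding of definitions that the paper has in mind; the paper itself omits the proof entirely, merely stating that the lemma ``is straightforward to show,'' and your argument fills in exactly those details. The two points you flag (matching arities under the permutation, and verifying that the witnessing model actually satisfies the conclusion MD-sentence) are the only places where any care is needed, and you handle them correctly.
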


Let $\Gamma \vDash_{M} \gamma$ mean that for each model $\model{M}$ with domain $M$, if $\model{M} \models \Gamma$ then $\model{M} \models \gamma$. We call the relation $\vDash_{M}$ the {\em MD-logic of $M$}. We can now reconstruct the soundness and completeness argument from~\citep{fagin} and obtain the following theorem that the MD-system of $M$ is actually an axiomatization of the MD-logic of $M$.

\begin{Thm}[Completeness of the logic of a fixed domain]\label{t:Compl-MD-system-Countable-Domain}
Let $\Gamma$ be a {\em finite} set of MD-sentences and $\gamma$ an MD-sentence. Then, $\Gamma \vdash_M \gamma$ iff\/ $\Gamma \vDash_M \gamma$.
\end{Thm}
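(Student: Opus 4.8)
The plan is to mirror the propositional completeness proof of \citep{fagin}, treating soundness and completeness separately. Soundness of each rule is checked individually: rules (2), (3), (4), (7) preserve truth by Lemma~\ref{equiv} (their premises and conclusions are logically equivalent), while rules (5) and (6) are easily seen to be sound since projecting an information set or enlarging it can only increase the class of satisfying models. From this, a routine induction on the length of proofs gives $\Gamma \vdash_M \gamma \implies \Gamma \vDash_M \gamma$.

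For the converse, I would argue by contraposition: assume $\Gamma \not\vdash_M \gamma$ and build a model $\model{M}$ with $\model{M} \models \Gamma$ but $\model{M} \not\models \gamma$. Since $\Gamma$ is finite, collect all the formulas occurring as components of sentences in $\Gamma \cup \{\gamma\}$, close this finite collection under subformulas, and fix an enumeration $\f_1(\overline{x}_{\f_1}), \ldots, \f_N(\overline{x}_{\f_N})$ of the resulting finite set $G$; in particular $G$ contains all atomic formulas (hence all predicates) occurring anywhere relevant. Now, using the axiom~(1) for this list, rule~(4) to intersect in the information sets coming from each sentence of $\Gamma$ (after using rules~(2),~(3),~(5) to align the component lists), and finally rule~(7) to pass to good tuples, derive a single MD-sentence $\tuple{\f_1(\overline{x}_{\f_1}), \ldots, \f_N(\overline{x}_{\f_N}); S}$ with $\Gamma \vdash_M \tuple{\f_1, \ldots, \f_N; S}$ that is minimized (by Lemma~\ref{min}, since $G$ is closed under subformulas, and using Lemma~\ref{min2} to see that the intermediate rule applications preserve minimization once rule~(7) has been applied). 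The key point is then: if every tuple $\tuple{f_1, \ldots, f_N} \in S$ also satisfied $\gamma$ (i.e. its relevant subtuple landed in the information set of $\gamma$), then using rules~(2),~(3),~(5),~(6) we could derive $\gamma$ itself from $\tuple{\f_1, \ldots, \f_N; S}$, contradicting $\Gamma \not\vdash_M \gamma$. Hence there is some tuple $\tuple{f_1, \ldots, f_N} \in S$ witnessing failure of $\gamma$; by minimization (Lemma~\ref{min}) this tuple is realized by an actual model $\model{M}$ with domain $M$, in which the $\f_i$ are interpreted exactly by the $f_i$. By Lemma~\ref{equiv} applied along the derivation, $\model{M} \models \Gamma$, while $\model{M} \not\models \gamma$ because the relevant subtuple of $\tuple{f_1, \ldots, f_N}$ lies outside the information set of $\gamma$. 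This gives $\Gamma \not\vDash_M \gamma$, completing the contrapositive.

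A few bookkeeping points deserve care. One must check that the component lists of distinct sentences in $\Gamma \cup \{\gamma\}$ can all be uniformly embedded into the single master list $\f_1, \ldots, \f_N$, matching free-variable tuples correctly; this is exactly what rules~(2) (permutation) and~(3) (padding with fresh components over $[0,1]^{M^{n}}$) are for, and it is the same manipulation as in the propositional case, only now the "coordinates" are functions $M^n \to [0,1]$ rather than reals. One must also verify that applying rule~(7) once to the master sentence genuinely yields a minimized sentence: this is the content of Lemma~\ref{min}, whose hypothesis (closure of $G$ under subformulas) we have arranged, and it is here that the quantifier clauses~(b) and~(c) in the definition of "good tuple" do the work of forcing the infimum/supremum semantics to be respected by any realizing model.

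The main obstacle, as in \citep{fagin}, is the minimization step — ensuring that a good tuple in the information set of the master MD-sentence can always be realized by a genuine real-valued structure with the prescribed domain $M$. In the first-order setting this is more delicate than in the propositional case because the "interpretation" assigned to a non-atomic or quantified formula must cohere, pointwise over all of $M^n$, with the interpretations of its subformulas and with infima/suprima over the possibly infinite domain $M$; Lemma~\ref{min} is precisely the statement that rule~(7) is strong enough to guarantee this, so the real work is in invoking it correctly (i.e. having made $G$ subformula-closed and having applied rule~(7) last) rather than in re-proving it. Everything else is the routine transcription of the propositional argument of \citep{fagin} with reals replaced by functions and the two extra clauses for quantifiers added to the notion of goodness.
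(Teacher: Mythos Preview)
Your proposal is correct and follows essentially the same strategy as the paper's proof: both build a single ``master'' MD-sentence over the subformula-closed set $G$ by padding with Rule~(3), permuting with Rule~(2), intersecting with Rule~(4), and applying Rule~(7) to obtain a minimized sentence, then use Rules~(2),~(5),~(6) to reach $\gamma$. The only cosmetic difference is that the paper argues directly (assuming $\Gamma \vDash_M \gamma$ and using minimization to verify the inclusion $T'' \subseteq S_0$ needed for Rule~(6)), whereas you phrase the same step contrapositively (a tuple outside $S_0$ yields, via minimization, a countermodel); also, the paper applies Rule~(7) to each padded $\gamma_i$ before intersecting rather than once at the end, but this is immaterial since goodness is a local property of tuples.
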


 \begin{proof} To see that $\Gamma \vdash_M \gamma$ only if $\Gamma \vDash_{M} \gamma$, one proceeds, as usual, by induction on the length of the proof, i.e.\ we start by showing that the axiom schema is sound and that the rules preserve the truth of the MD-sentences. For example, every instance of the axiom schema is sound since every formula in the usual first-order sense is interpreted by some mapping on a given model based on the domain $M$.
 
 To show completeness, we follow the argument on~\citep[p.~12]{fagin}.  The strategy is to transform $\Gamma$ into an equivalent MD-sentence from which $\gamma$ can be deduced. We may assume without loss of generality that $\Gamma$ is non-empty, for otherwise we could replace it by an instance of Axiom (1).

Indeed, assume that we have a finite set $\Gamma= \{\gamma_1, \ldots, \gamma_n \}$ of MD-sentences in which, for each $i \in \{1, \ldots, n\}$, $\gamma_i$ is  the MD-sentence $\tuple{\f^i_1(\overline{x}_{\f_1}), \ldots, \f^i_k(\overline{x}_{\f_{k_i}}); S_i}$. Suppose further that  $\gamma$ is $\tuple{\f^0_1(\overline{x}_{\f_1}), \ldots, \f^0_k(\overline{x}_{\f_{k_0}}); S_0}$. Then, take the sets $\Gamma_i=\{\f^i_1(\overline{x}_{\f_1}), \ldots, \f^i_k(\overline{x}_{\f_{k_i}})\}$ and $\Gamma_0=\{\f^0_1(\overline{x}_{\f_1}), \ldots, \f^0_k(\overline{x}_{\f_{k_0}})\}$. We take $G$ to be the usual closure under subformulas of the set $\bigcup_{j\geq0}\Gamma_j$.  

$G$ is a finite set and then we can follow step by step the argument in~\citep{fagin}, applying  our slightly modified Rules~(3) and~(7). In particular, we make use of Lemma~\ref{min} instead of~\citep[Lemma 5.2]{fagin}.

For each $i$ such that  $1\leq i \leq n$, we set $H_i= G \setminus \Gamma_i$. Let $r_i$ be the cardinality of $H_i$ and suppose that $H_i =\{\theta_1(\overline{x}_{\theta_1}), \ldots, \theta_{r_i}(\overline{x}_{\theta_{r_i}})\}$.  Then, by applying Rule~(3), we can deduce the MD-sentence $$\langle\f^i_1(\overline{x}_{\f_1}), \ldots, \f^i_k(\overline{x}_{\f_{k_i}}), \f_{k+1}(\overline{x}_{\f_{k+1}}), \ldots, \f_m(\overline{x}_{\f_m}); S \times [0,1]^{M^{n_{k+1}}} \times \ldots \times [0,1]^{M^{n_{m}}}\rangle$$
from $\tuple{\f^i_1(\overline{x}_{\f_1}), \ldots, \f^i_k(\overline{x}_{\f_{k_i}}); S}$, i.e.\ $\gamma_i$, where the sequence $ \f_{k+1}(\overline{x}_{\f_{k+1}}), \ldots, \f_m(\overline{x}_{\f_m})$ is $\theta_1(\overline{x}_{\theta_1}), \ldots, \theta_{r_i}(\overline{x}_{\theta_{r_i}})$. Now let $\p_i$ be the MD-sentence that results from applying Rule~(7) to the conclusion of Rule~(3) displayed above.

Let $\f_1(\overline{x}_{\f_1}), \ldots, \f_p(\overline{x}_{\f_{p}})$ be some ordering of the formulas in  $G$; then, since  the set of first-order formulas that appear in $\p_i$ is exactly $G$, we may use Rule~(2) to turn $\p_i$ into an equivalent MD-sentence of the form $\tuple{\f_1(\overline{x}_{\f_1}), \ldots, \f_p(\overline{x}_{\f_{p}}); T_i}$, which we may denote by $\chi_i$. Furthermore, since in deriving $\chi_i$, we only appealed to rules (2), (3) and (7), by Lemma~\ref{equiv}, this MD-sentence is  logically equivalent to $\gamma_i$.

Assume that $T= T_1 \cap \ldots \cap T_n$ and define $\chi:=\tuple{\f_1(\overline{x}_{\f_1}), \ldots, \f_p(\overline{x}_{\f_{p}}); T}$. From Lemma~\ref{min}, each $\p_i$ is minimized since it comes from Rule~(7) and $$\{\f^i_1(\overline{x}_{\f_1}), \ldots, \f^i_k(\overline{x}_{\f_{k_i}}), \f_{k+1}(\overline{x}_{\f_{k+1}}), \ldots, \f_m(\overline{x}_{\f_m})\}$$ is closed under subformulas. Moreover, by Lemma~\ref{min2}, each $\chi_i$ is minimized and, hence, $\chi$ is minimized.

The MD-sentence  $\chi$  can be derived from the MD-sentences $\chi_i$ by repeated applications of Rule~(4). In fact, by Lemma~\ref{equiv}, $\chi$ and $\{\chi_1, \ldots, \chi_n\}$  have the same logical consequences, and since  $\chi_i$ is equivalent to $\gamma_i$, we have that $\{\chi_1, \ldots, \chi_n\}$  and $\{\gamma_1, \ldots, \gamma_n\} = \Gamma$ have the same logical consequences. Hence, $\chi \vDash \gamma$ given that $\Gamma \vDash \gamma$ by hypothesis. Furthermore,  in order to show that  $\Gamma \vdash \gamma$ we simply need to show that $\chi \vdash \gamma$ since $\Gamma \vdash \chi$ by the above reasoning.

 Recall that $\gamma$ is $\tuple{\f^0_1(\overline{x}_{\f_1}), \ldots, \f^0_k(\overline{x}_{\f_{k_0}}); S_0}$ and $\chi$ is  $\tuple{\f_1(\overline{x}_{\f_1}), \ldots, \f_p(\overline{x}_{\f_{p}}); T}$, so  by applying Rule~(2) we can rearrange the order of the formulas $\f_1(\overline{x}_{\f_1}), \ldots, \f_p(\overline{x}_{\f_{p}})$ so they start with $\f^0_1(\overline{x}_{\f_1}), \ldots, \f^0_k(\overline{x}_{\f_{k_0}})$ and infer from $\chi$ the MD-sentence $\chi':=\tuple{\f^0_1(\overline{x}_{\f_1}), \ldots, \f^0_k(\overline{x}_{\f_{k_0}})\ldots; T'}$. Using Lemma~\ref{equiv}, we may see that $\chi$ and $\chi'$ are logically equivalent. Hence, $\chi' \vDash \gamma$ since $\chi \vDash \gamma$. Given that $\chi$ is minimized, it follows that $\chi'$ is too  by Lemma~\ref{min2}. Using Rule~(5), from $\chi'$ we may infer an MD-sentence $\chi''$ of the form $\tuple{\f^0_1(\overline{x}_{\f_1}), \ldots, \f^0_k(\overline{x}_{\f_{k_0}}); T''}$.
 
 The final step in the proof is to show that $T'' \subseteq S_0$ for then we can use Rule~(6) to infer $\gamma$ from $\chi''$, and hence we would have $\chi \vdash \chi '\vdash \chi'' \vdash \gamma$, which means that $\chi \vdash \gamma$ as desired.
 
Assume now that $(f_1, \ldots, f_k) \in T''$ to show that $\tuple{f_1, \ldots, f_{k_0}} \in S_0$. By definition of $T''$, there is $\tuple{f_1, \ldots, f_{k_0}, \ldots, f_p} \in T'$. Given that $\chi'$ is minimized,  there is a model $\model{M}$ of $\chi'$ such that the interpretations of the formulas $\f^0_1(\overline{x}_{\f_1}), \ldots, \f^0_k(\overline{x}_{\f_{k_0}})$ are $f_1, \ldots, f_{k_0}$, respectively. Since $\chi' \vDash \gamma$, then $\model{M}\models  \gamma$, and so $\tuple{f_1, \ldots, f_{k_0}}\in S_0$.
\end{proof}

There are some subtle points to consider around what we have done, which we will discuss in the next remarks. It is important to stress that we have axiomatized the logic of {\em all models} based on the set $M$, not the logic of one particular model $\model{M}$ based on $M$. 

\begin{Rmk}\label{rule} \emph{
Let us look at the case of two-valued logic with equality (i.e.\ the classical first-order logic which, of course, is covered by our approach). Let $M$ be a finite set (say of size $n$). Now, enumerate all the first-order validities  of the form $(|M|=n) \rightarrow \f$ where $\f$ is any first-order formula and $|M|=n$ is the first-order formula saying that the size of the domain $M$ is exactly $n$. In the case of finite domains, one might modify the approach here by allowing only MD-sentences that are interval-based (in the sense of~\citep{fagin}, that is, where the sets of truth-values involved in $S$ are unions of finitely-many rational intervals) or that come from such sentences by an application of Rule~(7), making the set $\mathrm{MD}(M)$ countable, and then it is possible to show by essentially the argument in~\citep{fagin} that validity is not only recursively enumerable but decidable on such domains.}
\end{Rmk}

\begin{Rmk} \label{useful}
 \emph{Recall that satisfiability on  countably  infinite models is not recursively enumerable in two-valued first-order logic.
 Now take a first-order sentence
 $\f$ and let
  $\f_1(\overline{x}_{\f_1}), \ldots, \f_k(\overline{x}_{\f_k}), \f$ be the list of all its  subformulas. Fixing a countably infinite domain 
  $M$, we may consider now the MD-sentence $\tuple{\f_1(\overline{x}_{\f_1}), \ldots, \f_k(\overline{x}_{\f_k}), \f; S}$ (call it $\p$) where $S:= \{0,1\}^{M^{n_1}}\times \ldots \times  \{0,1\}^{M^{n_k}} \times \{1\}$. Take now the MD-sentence obtained by applying our Rule~(7) to this sentence, $\tuple{\f_1(\overline{x}_{\f_1}), \ldots, \f_k(\overline{x}_{\f_k}), \f; S'}$ (call it $\psi'$). Observe that $\p$ and $\p'$ are  equivalent. Furthermore, $\f$ has a countably infinite model iff $\p$ is satisfiable iff $\p'$ is satisfiable. Finally, by minimization and the semantics of MD-sentences, $\p'$ is satisfiable iff $S'$ is non-empty. Hence, the problem of whether an arbitrary  $S'$ is non-empty is not recursively enumerable.
 }
\end{Rmk}

Rule~(7) implies that our formal system is not finitistic in the sense of metamathematics~\citep{kleene} since when infinite domains are involved it cannot all be formalizable in arithmetic, it goes into the realm of infinitary mathematics. In this sense it is akin to an infinitary  proof system (although it does not involve infinitary formulas in the usual sense).

\subsection{Modal logic (the logic of a fixed frame)}
For this subsection, fix a frame $\mathfrak{F}:= \tuple{M, R}$ where $R \subseteq M^2$ is a binary relation on a non-empty set $M$ (finite or infinite, where we may call the elements $M$ \emph{worlds}).\footnote{In this paper we consider only this classical notion of frame, although the literature of many-valued logics has also studied natural many-valued generalizations in which $R$ would be taken as a mapping from $M^2$ to $[0,1]$ (or to other more general structures of truth-degrees); see e.g.~\citep{bou}.} Consider now a vocabulary $\tau$ consisting only of propositional variables as in modal logic and a base modal language with $\Box$ and $\Diamond$ (unlike classical logic, many-valued logics do not allow in general to define these two operators from one another). Now the set $\mathrm{MD}(M)$ of MD-sentences contains all the expressions of the form $\tuple{\f_1, \ldots, \f_k; S}$
%\footnote{} 
where each $\f_i$ is a modal formula and $S \subseteq ([0,1]^{M})^k$\!.

For each real-valued model $\model{M}$-based on $\mathfrak{F}= \tuple{M, R}$, i.e., a structure where each propositional variable $p \in \tau$ is interpreted as a mapping $p_{\model{M}}\colon   M\longrightarrow [0,1]$, we can define a notion of truth-value at a world $w \in M$:

\begin{itemize}
\item$\|p[w]\|_{\model{M}}=p_{\model{M}}(w)$, for each $p\in \tau$;

\item $\|\circ( \f_0, \ldots, \f_n)[w]\|_{\model{M}}=$
\item[]$\circ(\|\f_0[w]\|_{\model{M}}, \ldots, \|\f_n[w]\|_{\model{M}})$, for $n$-ary connective $\circ$;

\item $\semvalue{ \Box \f [w]}_{\model{M}}=\inf\{\|\f [v]\|_{\model{M}}\mid v\in M, \tuple{w, v} \in R\}$;
\item $\semvalue{ \Diamond \f [w]}_{\model{M}}=\sup\{\|\f [v]\|_{\model{M}}\mid v\in M, \tuple{w, v} \in R\}$.
\end{itemize}

Every formula $\f$ can be said to be \emph{interpreted} in the model $\model{M}$ by the mapping $f_\f\colon M \longrightarrow [0, 1]$ defined as $w \mapsto \semvalue{\f[w]}_\model{M}$ (we also say that $\f$ \emph{defines} the mapping $f_\f$ in the model $\model{M}$). Given an MD-sentence $\tuple{\f_1, \ldots, \f_k; S}$, we write  $$\model{M} \models \tuple{\f_1, \ldots, \f_k; S}$$ if the formulas $\f_1, \ldots, \f_k$ respectively define mappings $f_1, \ldots, f_k$ in the model $\model{M}$ and $\tuple{f_1, \ldots, f_k} \in S$. 

As with the first-order case, from  the axioms and inference rules from~\citep{fagin} we need to modify only the following:

\paragraph{Axioms} 
\begin{itemize}
\item[(1)] $\tuple{\f_1, \ldots, \f_k, [0,1]^{M} \times \ldots \times  [0,1]^{M}}$ for any formulas $\f_1, \ldots, \f_k$.
\end{itemize}
\paragraph{Inference rules} 
%Suppose that $\pi$ is a permutation of $1, \dots, k$, then we have the following rule:
%
%\begin{itemize}
%\item[(2)] For any set $M$, from $(\f_1(\overline{x}_{\f_1}), \dots, \f_k(\overline{x}_{\f_k}); S(M))$ infer $(\f_{\pi (1)}(\overline{x}_{\f_{\pi (1)}}), \dots, \f_{\pi (k)}(\overline{x}_{\f_{\pi (1)}}); S'(M))$,
%\end{itemize}
%where $S'(M) = \{(f_{\pi (1)}, \dots, f_{\pi (k)} ) \mid (f_{1}, \dots, f_{k} ) \in S(M)\}$.
%

\begin{itemize}
\item[(3)] From $$\tuple{\f_1, \ldots, \f_k; S}$$ infer $$\langle \f_1, \ldots, \f_k, \f_{k+1}, \ldots, \f_m; S \times [0,1]^{M}\times \ldots \times  [0,1]^{M}\rangle,$$
\end{itemize}

%\guille{This rule is interesting because if we start with a sentence $(\f_1, \dots, \f_k; S)$ what do we get as a conclusion?}

%\begin{itemize}
%\item[(4)] For any set $M$, from $$(\f_1(\overline{x}_{\f_1}), \dots, \f_k(\overline{x}_{\f_k}); S_1(M))$$ and  $$(\f_1(\overline{x}_{\f_1}), \dots, \f_k(\overline{x}_{\f_k}); S_2(M))$$ infer $$(\f_1(\overline{x}_{\f_1}), \dots, \f_k(\overline{x}_{\f_k}); S_1(M) \cap S_2(M)).$$
%\end{itemize}
%
%
%\begin{itemize}
%\item[(5)] For any set $M$ and $0< r < k$, from $$(\f_1(\overline{x}_{\f_1}), \dots, \f_k(\overline{x}_{\f_k}); S(M))$$  infer $$(\f_1(\overline{x}_{\f_1}), \dots, \f_{k-r}(\overline{x}_{\f_{k-r}}); S'(M))$$
%\end{itemize}
%where $S'(M)= \{(f_1, \dots, f_{k-r}) \mid (f_1, \dots, f_{k}) \in S(M)\}$.
%
%
%
%\begin{itemize}
%\item[(6)] For any set $M$, from $$(\f_1(\overline{x}_{\f_1}), \dots, \f_k(\overline{x}_{\f_k}); S(M))$$  infer $$(\f_1(\overline{x}_{\f_1}), \dots, \f_k(\overline{x}_{\f_k}); S'(M))$$
%\end{itemize}
%when $S(M) \subseteq S'(M)$.

and we also need to modify the notion of good tuple for Rule~(7). Indeed, given an MD-sentence $\tuple{\f_1, \ldots, \f_k; S}$, now we say that a tuple $\tuple{f_1, \ldots, f_{k}} \in S$ is {\em good} if
\begin{itemize}
\item[(a)] $f_m = \circ (f_{m_1}, \ldots,   f_{m_j})$ whenever $\f_m = \circ(\f_{m_1}, \ldots, \f_{m_j})$,
\item[(b)]  $f_i(w)= \inf\{f_j( e) \mid e \in M, \tuple{w,e} \in R\}$ whenever $\f_i = \Box \f_j,$ for all $w \in M$,
\item[(c)]  $f_i(w)= \sup\{f_j( e) \mid e \in M, \tuple{w,e} \in R\}$ whenever $\f_i = \Diamond \f_j,$ for all $w \in M$.
\end{itemize}

As before, we get the following (since the interpretations of the propositional variables in $\tau$ is what determines a model over $\mathfrak{F}$): 
\begin{Lem}\label{min3} Let 
$\tuple{\f_1, \ldots, \f_k; S}$ be the premise of Rule~(7) and assume that $G=\{\f_1, \ldots, \f_k\}$ is closed under subformulas in the usual sense. Then, the conclusion $\tuple{\f_1, \ldots, \f_k; S'}$ is minimized.
\end{Lem}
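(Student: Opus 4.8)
The plan is to prove Lemma~\ref{min3} by adapting the argument of Lemma~\ref{min} verbatim, replacing the notion of a structure over the domain~$M$ with that of a real-valued model over the fixed frame~$\mathfrak{F}=\tuple{M,R}$. So suppose $\tuple{f_1,\ldots,f_k}\in S'$, where $S'$ is the set of good tuples in $S$. Since $G=\{\f_1,\ldots,\f_k\}$ is closed under subformulas, every propositional variable occurring in any of the $\f_i$ also occurs among the $\f_i$ (it is itself a subformula). Hence there is a subsequence $\tuple{f_{j_1},\ldots,f_{j_l}}$ of $\tuple{f_1,\ldots,f_k}$ whose entries are precisely the $f_i$ attached to the propositional variables $p_1,\ldots,p_l$ appearing in $G$; each such $f_{j_t}$ is a mapping $M\longrightarrow[0,1]$. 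Declaring $p_t$ to be interpreted by $f_{j_t}$ (for $t=1,\ldots,l$, and, say, by the constant $0$ mapping for any variable not appearing in $G$) determines a unique real-valued model $\model{M}$ over $\mathfrak{F}$.

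The key step is then to check, by induction on the structure of formulas in $G$, that for every $i$ the interpretation $f_{\f_i}$ of $\f_i$ in $\model{M}$ equals $f_i$. The base case is the definition of $\model{M}$ on propositional variables. For the connective case, if $\f_i=\circ(\f_{i_1},\ldots,\f_{i_j})$, then all $\f_{i_s}$ are in $G$ (closure under subformulas), so by the induction hypothesis their interpretations are $f_{i_1},\ldots,f_{i_j}$, and clause~(a) of goodness gives $f_i=\circ(f_{i_1},\ldots,f_{i_j})=f_{\f_i}$, using the pointwise definition of $\circ$ on mappings. For the modal cases $\f_i=\Box\f_j$ or $\f_i=\Diamond\f_j$, again $\f_j\in G$, so its interpretation is $f_j$ by the induction hypothesis, and clauses~(b) and~(c) of goodness say exactly that $f_i(w)=\inf\{f_j(v)\mid \tuple{w,v}\in R\}$, respectively $\sup$, for all $w\in M$ — which is precisely the defining equation of $\semvalue{\Box\f_j[\,\cdot\,]}_{\model{M}}$, respectively $\semvalue{\Diamond\f_j[\,\cdot\,]}_{\model{M}}$. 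Thus $f_{\f_i}=f_i$ for all $i$, so $\tuple{f_{\f_1},\ldots,f_{\f_k}}=\tuple{f_1,\ldots,f_k}\in S'$, i.e. $\model{M}\models\tuple{\f_1,\ldots,\f_k;S'}$ with the interpretations prescribed by the chosen tuple, which is what minimization demands.

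I do not anticipate a genuine obstacle here: the only point requiring a little care is the same one that already underlies Rule~(7) and Lemma~\ref{min}, namely that the three goodness clauses jointly say exactly that the tuple respects the full inductive semantic definition of the underlying real-valued modal logic, so that a tuple of functions closed under subformulas and satisfying them is literally realized by the model read off from its atomic entries. One should also note that the subsequence of atomic entries is well defined precisely because $G$ is closed under subformulas (so no propositional variable of $G$ is missing), which is the hypothesis of the lemma; without it the construction of $\model{M}$ could fail. Everything else is a routine transcription of the first-order argument, with the inf/sup over $R$-successors of a world playing the role of the inf/sup over the domain~$M$.
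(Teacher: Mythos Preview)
Your proposal is correct and follows exactly the approach the paper intends: the paper does not give a separate proof of Lemma~\ref{min3} but simply remarks that it follows ``as before'' from the argument of Lemma~\ref{min}, since interpretations of the propositional variables determine a model over~$\mathfrak{F}$. Your write-up is in fact more explicit than the paper's own proof of Lemma~\ref{min}, spelling out the induction on formula complexity that the paper compresses into the single sentence ``Rule~(7) is designed to select only those sequences $\tuple{f_1,\ldots,f_k}$ that respect the semantics of the underlying real-valued logic.''
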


Once more, closely following the argument from~\citep{fagin}, we may show that:

\begin{Thm}[Completeness of the logic of a fixed frame]
For $\Gamma$ a {\em finite} set of MD-sentences and $\gamma$ an MD-sentence, $\Gamma \vdash_{\mathfrak{F}} \gamma$ iff\/ $\Gamma \vDash_{\mathfrak{F}} \gamma$.
\end{Thm}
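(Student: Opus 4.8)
The plan is to replay, almost verbatim, the argument used for Theorem~\ref{t:Compl-MD-system-Countable-Domain}, substituting modal formulas for first-order formulas, the modal version of Rule~(3), and the modal good-tuple clauses (a)--(c). Soundness (that $\Gamma \vdash_{\mathfrak{F}} \gamma$ implies $\Gamma \vDash_{\mathfrak{F}} \gamma$) is, as usual, an induction on the length of the derivation: every instance of Axiom~(1) holds in any model over $\mathfrak{F}$ because each modal formula is interpreted there by some mapping $M \to [0,1]$, and each of Rules~(2)--(7) preserves truth in a model over $\mathfrak{F}$. The only genuinely new verification is that Rule~(7) with the modal clauses (a)--(c) is sound, and this holds precisely because the interpretations $f_1,\dots,f_k$ that any model $\model{M}$ assigns to $\f_1,\dots,\f_k$ satisfy exactly those clauses, by the inductive definition of $\semvalue{\cdot}_{\model{M}}$ (the clauses for $\Box$ and $\Diamond$ being relativised to $R$-successors).

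For completeness, assume $\Gamma = \{\gamma_1,\dots,\gamma_n\}$ with $\gamma_i = \tuple{\f^i_1,\dots,\f^i_{k_i}; S_i}$ and $\gamma = \tuple{\f^0_1,\dots,\f^0_{k_0}; S_0}$ (we may assume $\Gamma$ non-empty, replacing it by an instance of Axiom~(1) otherwise). Let $G$ be the closure under subformulas of the finite set of all modal formulas occurring in $\Gamma \cup \{\gamma\}$; then $G$ is finite. For each $i$ we pad $\gamma_i$ by Rule~(3) so that all formulas of $G$ appear as components, apply Rule~(7) to obtain an MD-sentence $\p_i$ that is minimized by Lemma~\ref{min3} (its component list is $G$, which is subformula-closed), and reorder by Rule~(2) to a fixed enumeration $\f_1,\dots,\f_p$ of $G$, obtaining $\chi_i = \tuple{\f_1,\dots,\f_p; T_i}$. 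By the modal analogue of Lemma~\ref{equiv} (the premises and conclusions of Rules~(2),(3),(4),(7) are logically equivalent over $\mathfrak{F}$, proved exactly as in the first-order case) each $\chi_i$ is logically equivalent to $\gamma_i$, and by the modal analogue of Lemma~\ref{min2} (minimization is preserved by Rules~(2) and~(4)) each $\chi_i$ is minimized. Putting $T = T_1 \cap \dots \cap T_n$ and $\chi = \tuple{\f_1,\dots,\f_p; T}$, we get that $\chi$ is derivable from $\{\chi_1,\dots,\chi_n\}$ by repeated use of Rule~(4), that $\chi$ is minimized, and that $\chi$ and $\Gamma$ have the same consequences over $\mathfrak{F}$; hence $\chi \vDash_{\mathfrak{F}} \gamma$ and $\Gamma \vdash_{\mathfrak{F}} \chi$.

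It then remains to show $\chi \vdash_{\mathfrak{F}} \gamma$. Using Rule~(2) we reorder $\chi$ so that $\f^0_1,\dots,\f^0_{k_0}$ come first, obtaining a minimized, logically equivalent $\chi' = \tuple{\f^0_1,\dots,\f^0_{k_0},\dots; T'}$, and then Rule~(5) gives $\chi'' = \tuple{\f^0_1,\dots,\f^0_{k_0}; T''}$. One checks $T'' \subseteq S_0$: if $\tuple{f_1,\dots,f_{k_0}} \in T''$ it extends to some $\tuple{f_1,\dots,f_{k_0},\dots,f_p} \in T'$; since $\chi'$ is minimized there is a model $\model{M}$ over $\mathfrak{F}$ of $\chi'$ in which $\f^0_1,\dots,\f^0_{k_0}$ are interpreted by $f_1,\dots,f_{k_0}$; and since $\chi' \vDash_{\mathfrak{F}} \gamma$ we get $\model{M} \models \gamma$, i.e.\ $\tuple{f_1,\dots,f_{k_0}} \in S_0$. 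Finally Rule~(6) yields $\gamma$ from $\chi''$, so $\chi \vdash_{\mathfrak{F}} \chi' \vdash_{\mathfrak{F}} \chi'' \vdash_{\mathfrak{F}} \gamma$, and hence $\Gamma \vdash_{\mathfrak{F}} \gamma$, as desired.

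The step I expect to need the most care is Lemma~\ref{min3}, i.e.\ that Rule~(7) applied over a subformula-closed component list genuinely minimizes. The point is that the good-tuple clauses (a)--(c) pin down the value of every $f_i$ at every world $w$ in terms of the values assigned to the propositional variables of $\tau$ occurring in $G$, with no circularity — this goes through by induction on subformula complexity, which strictly decreases along (a)--(c) — and that those atomic values do determine a bona fide model $\model{M}$ over the fixed frame $\mathfrak{F}$ whose induced interpretations agree with the given tuple. An easy-to-overlook case is worlds with no $R$-successors: there clause~(b) forces $f_i(w) = \inf\emptyset = 1$ when $\f_i = \Box\f_j$ and clause~(c) forces $f_i(w) = \sup\emptyset = 0$ when $\f_i = \Diamond\f_j$, which matches the semantics, so arbitrary (in particular non-serial) frames are handled automatically. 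As in the propositional and first-order cases, the restriction to finite $\Gamma$ cannot be dropped, because the background real-valued logic (e.g.\ \L ukasiewicz logic) is only finitely strong.
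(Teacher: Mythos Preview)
Your proposal is correct and follows essentially the same approach as the paper: the paper does not spell out a proof of this theorem but simply states that it follows by repeating the argument from~\citep{fagin} (i.e., the argument already given for Theorem~\ref{t:Compl-MD-system-Countable-Domain}) with Lemma~\ref{min3} in place of Lemma~\ref{min}, which is exactly what you do. Your added remarks on dead-end worlds and on the necessity of the finiteness assumption on $\Gamma$ are accurate and go slightly beyond what the paper makes explicit.
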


\begin{Rmk} \emph{ Observe that there is nothing canonical about the t-norm algebras on $[0,1]$: everything that has been done here could have been done for logics based on arbitrary fixed  lattices (see e.g.~\citep{haj}). }
\end{Rmk}

\section{Axiomatizations of prominent first-order (and modal) real-valued logics}\label{sec2}

Recall that, in the context of classical first-order logic, there are countably infinite models whose set of true first-order formulas is not recursively enumerable (for example the natural numbers with addition and multiplication), hence one cannot hope to find a recursive enumeration of the true formulas of any such model. By the L\"owenheim--Skolem theorem, the first-order sentences which are true in all countably infinite models coincide with the sentences that are true in all infinite models. On the other hand, the class of infinite models is axiomatizable in first-order logic: consider the theory formed by the sentences ``there are at least $n$ elements" for all natural numbers $n>0$. Hence, the first-order sentences which are true in all infinite models are recursively enumerable.  For if $\f$ is true in all countably infinite models, then $\neg \f$ cannot have any infinite model since otherwise $\neg \f$ would have a countably infinite model by the L\"owenheim--Skolem theorem.

On the other hand, let us analyze what happens in the real-valued case. In this section we will consider only the case of vocabularies \emph{without} equality. This is a very standard practice in mathematical fuzzy logic (e.g.~\citep{bel, scar, baaz, montagna, ta, hay}). Recall that neither \L ukasiewicz  nor Product first-order logic have a recursively enumerable set of validities with the semantics given on $[0,1]$ (see~\citep{scar} and~\citep{baaz}, respectively). On the other hand, G\"odel first-order logic is recursively axiomatizable~\citep{ta}, and   both \L ukasiewicz and Product logics can be axiomatized by the addition of an infinitary rule (see~\citep{hay, bel} and~\citep{montagna}, respectively).

\begin{Pro}\label{ls}
Let $\mathcal{L}$ be a first-order real-valued logic.\footnote{For example, $\mathcal{L}$ might be \L ukasiewicz, Product, or G\"odel first-order logic or, more generally, any first-order extension of an algebraizable logic in the sense of~\citep{de}.} Suppose that we have a countable vocabulary without equality. Then, for any $\mathcal{L}$-sentences $\f_1, \ldots, \f_k$ and any finite sequence $\tuple{r_1, \ldots, r_k}$ of reals from the interval $[0,1]$, there is an $\mathcal{L}$-model where $\f_1, \ldots, \f_k$ take values $r_1, \ldots, r_k$ respectively iff there is an $\mathcal{L}$-model with a countably infinite domain where $\f_1, \ldots, \f_k$ take values $r_1, \ldots, r_k$ respectively. 
\end{Pro}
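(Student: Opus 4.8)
The right-to-left implication is trivial, so the content is the left-to-right direction, which I would obtain by adapting the downward L\"owenheim--Skolem theorem to the real-valued setting and then padding the resulting model so that its domain becomes countably infinite. Starting from an arbitrary $\mathcal L$-model $\model{M}$ in which $\f_1,\dots,\f_k$ take the values $r_1,\dots,r_k$, the plan is: (i) build a countable substructure $\model{N}$ of $\model{M}$ that is elementary in the real-valued sense, meaning $\|\psi[\bar a]\|_{\model{N}}=\|\psi[\bar a]\|_{\model{M}}$ for every $\mathcal L$-formula $\psi$ and every tuple $\bar a$ over $N$; then (ii) inflate $\model{N}$ to a model with a countably infinite domain that still assigns the same value to every $\mathcal L$-sentence. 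Step~(ii) then delivers the desired model, since in particular $\f_1,\dots,\f_k$ retain the values $r_1,\dots,r_k$.

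For step~(i) I would run the usual Tarski--Vaught closure, but with \emph{approximate} witnesses --- this is the one place where the real-valued semantics forces a departure from the classical proof, because infima and suprema need not be attained. Construct an increasing chain $N_0\subseteq N_1\subseteq\cdots$ of countable subsets of $M$ with $N_0$ any nonempty countable set, and, given $N_n$, throw into $N_{n+1}$, for every $\mathcal L$-formula $\psi(\bar x,y)$ (only countably many, as the vocabulary is countable), every tuple $\bar a$ over $N_n$ and every rational $q\in[0,1]$: a point $e\in M$ with $\|\psi[\bar a,e]\|_{\model{M}}>q$ whenever $\|(\exists y)\psi[\bar a]\|_{\model{M}}>q$, and a point $e\in M$ with $\|\psi[\bar a,e]\|_{\model{M}}<q$ whenever $\|(\forall y)\psi[\bar a]\|_{\model{M}}<q$. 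Each $N_{n+1}$ is countable, so $N:=\bigcup_n N_n$ is countable; let $\model{N}$ be the substructure of $\model{M}$ on $N$, each predicate interpreted by the restriction of its interpretation in $\model{M}$. An induction on the complexity of $\psi$ gives $\|\psi[\bar a]\|_{\model{N}}=\|\psi[\bar a]\|_{\model{M}}$: the atomic case is immediate from the definition of substructure (this is exactly where the absence of equality is used, since atomic truth values then depend only on the predicate interpretations), the connective case is truth-functionality plus the induction hypothesis, and for $(\exists y)\psi$ the inequality $\le$ is clear from the inf/sup semantics while $\ge$ holds because for each rational $q<\|(\exists y)\psi[\bar a]\|_{\model{M}}$ the construction has already put a witness $e$ with $\|\psi[\bar a,e]\|_{\model{M}}>q$ into some $N_{n+1}\subseteq N$, so by the induction hypothesis $\|\psi[\bar a,e]\|_{\model{N}}>q$; letting $q$ range over such rationals and using their density closes the case, and $(\forall y)\psi$ is dual.

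For step~(ii), pick a surjection $\pi\colon\omega\to N$ (possible since $N$ is nonempty and countable) and let $\model{N}^{\ast}$ be the structure with universe $\omega$ in which each $n$-ary predicate $P$ is interpreted by $P_{\model{N}^{\ast}}(b_1,\dots,b_n):=P_{\model{N}}(\pi(b_1),\dots,\pi(b_n))$. A routine induction on formulas --- truth-functionality for connectives, and the fact that $\{\pi(e):e\in\omega\}=N$ for the quantifier clauses --- shows $\|\psi[\bar b]\|_{\model{N}^{\ast}}=\|\psi[\pi(\bar b)]\|_{\model{N}}$ for all $\psi$ and all $\bar b$ over $\omega$; in particular every $\mathcal L$-sentence retains its value, and $\model{N}^{\ast}$ has a countably infinite domain, completing the argument. (When $M$ is already countable one may skip step~(i) and pull back directly, but the approximate-witness closure is the essential new ingredient whenever $M$ is uncountable.)

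The main obstacle is precisely the non-attainment of infima and suprema: one cannot demand a witness that realizes the value of an existential or universal formula, only a family of witnesses approximating it from the correct side, which is why the closure in step~(i) is indexed by rationals and the inductive quantifier step must be routed through a density argument rather than a single choice of witness. The only other point needing attention is the bookkeeping that keeps the set of witnesses countable at each stage, which follows at once from the countability of the vocabulary. Finally, equality is genuinely excluded here: under crisp equality the pullback in step~(ii) need not preserve sentence values --- e.g.\ $(\exists x)(\exists y)\neg(x\approx y)$ can jump from value $0$ in $\model{N}$ to $1$ in $\model{N}^{\ast}$ --- so the restriction to vocabularies without equality in the hypothesis is not merely a convenience.
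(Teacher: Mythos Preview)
Your proof is correct and essentially reproves from scratch the two L\"owenheim--Skolem results that the paper simply cites from~\citep{de}: the paper invokes~\citep[Thm.~30]{de} for the downward direction when the original model is infinite (yielding a countable elementary substructure preserving all truth-values) and~\citep[Thm.~31]{de} for the padding direction when the original model is finite. Your step~(i) is a direct construction of the downward theorem via a Tarski--Vaught closure with rational-indexed approximate witnesses, and your step~(ii) is a direct construction of the padding via a surjective pullback along $\pi\colon\omega\to N$. The paper's approach is shorter but black-boxes the model theory; yours is self-contained and makes explicit where each hypothesis enters --- countability of the vocabulary for the bookkeeping in step~(i), the inf/sup semantics and density of the rationals for the quantifier clauses, and the absence of equality for step~(ii). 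One small expository slip: the absence of equality is not actually needed in step~(i), since an elementary substructure interprets crisp equality correctly by restriction; it is only step~(ii) that genuinely requires it, as you yourself observe at the end.
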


\begin{proof} Suppose there is an $\mathcal{L}$-model, $\model{M}$, where $\f_1, \ldots, \f_k$ take values $r_1, \ldots, r_k$ respectively. By~\citep[Thm.~31]{de}, if $M$ is finite, one can build an  $\mathcal{L}$-model with a countably infinite domain where $\f_1, \ldots, \f_k$ take values $r_1, \ldots, r_k$ respectively (in fact there is a mapping between the two models that preserves the truth-values of all formulas). On the other hand, by~\citep[Thm.~30]{de}, if $M$ is infinite, one can build an  $\mathcal{L}$-model with a countably infinite domain where $\f_1, \ldots, \f_k$ take values $r_1, \ldots, r_k$ respectively (in such a way that the countable model can be chosen to be an elementary substructure of the original that preserves the truth-values of all formulas). 
\end{proof}

%\begin{Cor}\label{ax}
%Let $\mathcal{L}$ be any of \L ukasiewicz, Product, or G\"odel predicate logic. Suppose that we have a countable vocabulary without equality. Then, for any sentence $\phi$, we have: $\phi$ is valid in $\mathcal{L}$ iff $\phi$ is true (i.e.\ takes value $1$) in all countably infinite models. Hence, the collection of all formulas true in all countably infinite models  in both \L ukasiewicz and Product  predicate logics are not recursively enumerable, whereas for G\"odel they are.
%\end{Cor}
%
%\begin{proof} Suppose $\phi$ is true (i.e.\ takes value $1$) in all countably infinite models and that $\phi$ is  not valid in $\mathcal{L}$, so it takes some value $r\neq 1$ in a real-valued model $\model{M}$, and hence, by Proposition \ref{ls}, the same happens in some countable model, which is a contradiction.
%\end{proof}

From this proposition and Theorem~\ref{t:Compl-MD-system-Countable-Domain} we immediately obtain that consequence from finite sets of premises in \L ukasiewicz, Product, and G\"odel first-order real-valued logic (without equality) is complete with respect to the MD-system of a countable domain:

\begin{Cor}\label{cont}
Let $M$ be a fixed countably infinite domain, let $\mathcal{L}$ be either \L ukasiewicz, Product, or G\"odel first-order real-valued logic without equality, and let $\vDash_\mathcal{L} $ be the corresponding consequence relation. For any finite set $\vectk{\f},\p$ of $\mathcal{L}$-sentences, we have:
\begin{center}$\tuple{\f_1; \{1\}}, \ldots, \tuple{\f_k; \{1\}}\vdash_M \tuple{\p; \{1\}}$ iff\/ $\f_1, \ldots, \f_k \vDash_\mathcal{L} \p$.
\end{center}
\end{Cor}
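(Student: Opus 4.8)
The plan is to deduce Corollary~\ref{cont} by combining the completeness theorem for a fixed countable domain (Theorem~\ref{t:Compl-MD-system-Countable-Domain}) with the L\"owenheim--Skolem-type Proposition~\ref{ls}. The core observation is that, once we fix $M$ to be a countably infinite set, the MD-provability relation $\vdash_M$ coincides with the MD-semantic consequence $\vDash_M$, i.e.\ consequence over \emph{all} real-valued models whose domain is exactly $M$; so it suffices to show that, for the one-dimensional MD-sentences of the special shape $\tuple{\f_i;\{1\}}$ and $\tuple{\p;\{1\}}$, semantic consequence over models with domain $M$ is the same as the $\mathcal{L}$-consequence $\vDash_\mathcal{L}$ over \emph{all} $\mathcal{L}$-models.

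\medskip
\noindent\textbf{Step 1.} Unfold the definitions. For an $\mathcal{L}$-model $\model{M}$ with domain $M$ and an $\mathcal{L}$-sentence $\f$, the formula $\f$ defines the constant mapping with value $\semvalue{\f}_\model{M}$, so $\model{M}\models\tuple{\f;\{1\}}$ iff $\semvalue{\f}_\model{M}=1$. Hence $\tuple{\f_1;\{1\}},\ldots,\tuple{\f_k;\{1\}}\vDash_M\tuple{\p;\{1\}}$ means exactly: every $\mathcal{L}$-model with domain $M$ in which all $\f_i$ take value $1$ also gives $\p$ value $1$.

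\medskip
\noindent\textbf{Step 2 (right-to-left).} Assume $\f_1,\ldots,\f_k\vDash_\mathcal{L}\p$. Then a fortiori every $\mathcal{L}$-model with domain $M$ that makes all $\f_i$ fully true makes $\p$ fully true, so $\tuple{\f_1;\{1\}},\ldots,\tuple{\f_k;\{1\}}\vDash_M\tuple{\p;\{1\}}$, and by Theorem~\ref{t:Compl-MD-system-Countable-Domain} we get $\tuple{\f_1;\{1\}},\ldots,\tuple{\f_k;\{1\}}\vdash_M\tuple{\p;\{1\}}$.

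\medskip
\noindent\textbf{Step 3 (left-to-right).} Conversely, suppose $\tuple{\f_1;\{1\}},\ldots,\tuple{\f_k;\{1\}}\vdash_M\tuple{\p;\{1\}}$; by soundness (Theorem~\ref{t:Compl-MD-system-Countable-Domain} again) this gives $\tuple{\f_1;\{1\}},\ldots,\tuple{\f_k;\{1\}}\vDash_M\tuple{\p;\{1\}}$. I want $\f_1,\ldots,\f_k\vDash_\mathcal{L}\p$, so take an \emph{arbitrary} $\mathcal{L}$-model $\model{N}$ (of any cardinality) with $\semvalue{\f_i}_\model{N}=1$ for all $i$; I must show $\semvalue{\p}_\model{N}=1$. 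Apply Proposition~\ref{ls} to the sentences $\f_1,\ldots,\f_k,\p$ and the tuple of values they take in $\model{N}$, namely $\tuple{1,\ldots,1,\semvalue{\p}_\model{N}}$: since these values are realized in \emph{some} $\mathcal{L}$-model ($\model{N}$), they are realized in some $\mathcal{L}$-model $\model{N}'$ with a countably infinite domain. Without loss of generality (relabelling elements) the domain of $\model{N}'$ is our fixed set $M$, so $\model{N}'$ witnesses that $\tuple{\f_1;\{1\}},\ldots,\tuple{\f_k;\{1\}}$ hold in an $\mathcal{L}$-model with domain $M$; by $\vDash_M$ we conclude $\semvalue{\p}_{\model{N}'}=1$, and since $\model{N}'$ realizes the same value of $\p$ as $\model{N}$, we get $\semvalue{\p}_\model{N}=1$, as required.

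\medskip
\noindent The main subtlety --- the step I expect to need the most care --- is the identification of the abstract countably infinite domain produced by Proposition~\ref{ls} with the \emph{particular} fixed set $M$ used in the MD-system: this is only a harmless renaming of elements (a bijection transporting the interpretations of the predicates), but one should note explicitly that $\vDash_M$ depends on $M$ only up to cardinality, so any countably infinite domain may be used. A secondary point is to check that Proposition~\ref{ls} genuinely applies here, which requires the vocabulary to be countable and without equality and $\mathcal{L}$ to be one of the logics covered by~\citep{de} --- precisely the hypotheses of the Corollary, since \L ukasiewicz, Product and G\"odel first-order logics (without equality) all satisfy them. Everything else is a direct unfolding of the semantics of one-dimensional MD-sentences of the form $\tuple{\cdot;\{1\}}$.
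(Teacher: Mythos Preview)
Your proof is correct and follows exactly the approach the paper intends: the paper states the corollary as an immediate consequence of Proposition~\ref{ls} and Theorem~\ref{t:Compl-MD-system-Countable-Domain} without spelling out the argument, and your Steps~1--3 simply make that immediate inference explicit, including the routine observation that $\vDash_M$ depends on $M$ only up to bijection.
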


Observe that Corollary~\ref{cont} would fail in the presence of equality in the vocabulary. This is because general validity cannot be reduced to truth in any particular infinite (even if only countable) model. The reason is that, if $\psi$ is the first-order  sentence expressing that the size of the domain is 3 then $\neg \psi$ would hold in every infinite domain $M$, whereas this cannot be a  valid sentence in any of the logics we are considering here since $\psi$ holds in models with universes of size 3. Thus, we would have that $\not\vDash_\mathcal{L} \neg \p$ but $\vdash_M \neg \psi$.

The purpose of any completeness theorem is to obtain the equivalence between a universal statement (about validity) and an existential statement (about the existence of a proof). The claim of existence of a proof is a $\Sigma_1$ claim on the natural numbers when the proof system is arithmetizable. By Corollary~\ref{cont} and  since neither \L ukasiewicz  nor Product first-order logic has a recursively enumerable set of validities, our proof systems are not arithmetizable when the domain is infinite. 

\begin{Rmk} 
 \emph{Observe that, even in the case of classical logic (without equality --the situation with equality is analogous and dealt with in \S \ref{sec4}), the axiomatization we have presented here (when the domain in question is infinite) cannot be recursive due to Rule~(7), where most of the strength of the present approach resides (cf. Remark \ref{useful}). Naturally, there are much more fine-tuned axiomatizations of classical logic and many of the real-valued logics  under consideration here, but the sacrifice we have made in terms of the manageability of our proof system has been in the interest of generality, so we can encompass all these logics at once.
 }
\end{Rmk}

\begin{Rmk}
\emph{Readers not familiar with encoding syntax and proofs in  set theory  may skip this remark. By representing MD-sentences as sets and proofs as sequences of such sets (similarly as things are done in infinitary logic~\citep{di}), our notion of proof will be a $\Sigma_1$ predicate (in the L\'evy hierarchy) over the set of all sets hereditarily of some sufficiently large cardinality $\kappa$ (in fact cardinality $|2^\omega| +1$ would suffice for the case of a countably infinite fixed domain). Therefore, we have completeness in the same sense as it can be obtained in infinitary proof systems. Let us sketch the details of this formalization. Suppose that we fix a countable domain $M$. To each formula $\phi$ we can assign a G\"odel number $\ulcorner\phi\urcorner$   in the usual manner~\citep{kleene}.  We may then assign to each MD-sentence $\tuple{\phi_1, \ldots, \phi_k; S}$ the ``G\"odel set'' $\ulcorner\tuple{\phi_1, \ldots, \phi_k; S}\urcorner $  which is simply the set $\tuple{\ulcorner\phi_1\urcorner, \ldots, \ulcorner\phi_k\urcorner; S}$ (using the Kuratowski definition of ordered tuples). Take now the collection  $H(|2^\omega| +1)$ containing all sets $x$ hereditarily of cardinality $< |2^\omega|+1$ in the sense that $x$, its members, its members of members, etc., are all of cardinality $< |2^\omega|+1$. Consider now the following set-theoretic structure: $\tuple{H(|2^\omega|), \in \restriction H(|2^\omega|+1)}$. All G\"odel sets $\tuple{\ulcorner\phi_1\urcorner, \ldots, \ulcorner\phi_k\urcorner; S}$ are elements of $H(|2^\omega|+1)$. A collection $K \subseteq H(|2^\omega|+1)$ is said to be $\Sigma_1$  on $H(|2^\omega|+1)$ if it is definable in the structure $\tuple{H(|2^\omega|+1), \in \restriction H(|2^\omega|+1)}$ by a set theoretic formula equivalent to one built from atomic formulas and their negations by means of  the connectives $\wedge, \vee$, the restricted quantifier $\forall x \in y$ and the quantifier $\exists x$. One can check then that the notion of $\tuple{\ulcorner\phi_1\urcorner, \ldots, \ulcorner\phi_k\urcorner; S}$ being a provable formula in our system is $\Sigma_1$  on $H(|2^\omega|+1)$ because it claims the existence of a finite sequence of MD-sentences such that $\tuple{\ulcorner\phi_1\urcorner, \ldots, \ulcorner\phi_k\urcorner; S}$ is the last element of such sequence and every MD-sentence in it has been obtained by applying one of a finite number of rules to previous elements.}
\end{Rmk}

\begin{Rmk} \emph{From the results in~\citep{vid} we know that neither \L ukasiewicz  nor Product modal logics on the interval $[0,1]$ have recursively enumerable finitary ``global"  consequence relations.\footnote{This means that $\Gamma \vDash \phi$ if for all models based on frames from a given class, if $\Gamma$ is true at all points (or worlds) of the model, then $\phi$ is similarly true in all of them.} Hence, similarly to what we observed for the first-order case, the approach here does axiomatize the logics in question, but it gives recursive enumerability only when the frame is finite, not in general.}
\end{Rmk}

Part of the interest of the present approach is the uniformity it provides in axiomatizing the previously mentioned logics (which were known to be axiomatizable by other infinitary methods). We are essentially giving one recipe to deal with  all cases. Moreover, none of our rules are explicitly infinitary and the infinitary component of our formulas is hidden in the sets $S$.

Finally, in general, we are clearly axiomatizing more levels of formal reasoning than it could be done before, for preservation of value $1$ is a mere fraction of the possibilities that the present system actually handles. The system axiomatizes genuine real-valued reasoning in all of G\"odel,  \L ukasiewicz, and Product first-order (and modal) logics.

\section{A 0-1 law for MD-logics}\label{sec3}
In this section we want to establish a 0-1 law in the style of~\citep{fagin0} for certain MD-logics, namely those where we consider suitable finite subalgebras $\alg{A}$ of $[0,1]$ (of the form $\tuple{A,\wedge^\alg{A},\vee^\alg{A},\conj^\alg{A},\to^\alg{A},\0^\alg{A},\1^\alg{A}}$). For example, both G\"odel and \L ukasiewicz logic have multiple finitely-valued versions (though Product logic does not), and we will list some examples below. This restriction to the finite setting is because we wish to have, when our vocabularies are relational and finite, only a finite number of possible models on a given finite domain, in analogy to what happens in classical logic in~\citep{fagin0} (or in the many-valued case already considered in~\citep{bano}).

\begin{Exm}[The algebra of \L ukasiewicz 3-valued logic]
The algebra $$ \mbox{\L}_3=\tuple{\{0, \frac{1}{2}, 1\},\wedge^{ \mbox{\L}_3},\vee^{ \mbox{\L}_3},\conj^{ \mbox{\L}_3},\to^{ \mbox{\L}_3}, 0,1}$$ such that
\begin{flushleft}
\begin{itemize}
\item $\wedge^{ \mbox{\L}_3}(x, y) = \mbox{min}\{x,y\}$
\item  $\vee^{ \mbox{\L}_3}(x,y) = \mbox{max}\{x,y\}$

\item $\conj^{ \mbox{\L}_3}(x, y) = \mbox{max}\{0, x+y-1\}$

\item $\to^{ \mbox{\L}_3}(x, y) = \mbox{min}\{1, 1-x+y\}$

\end{itemize}
\end{flushleft}
More generally, we may consider any \L ukasiewicz $n$-valued logic by using the algebra $\mbox{\L}_n$ on the carrier set $\{0,\frac{1}{n-1}, \frac{2}{n-1}, \dots, \frac{n-2}{n-1}, 1\}$ and with the same definitions of operations.
\end{Exm}

\begin{Exm}[The algebra of G\"odel 4-valued logic]

The algebra $$ \mbox{G}_4=\tuple{\{0, \frac{1}{3},  \frac{2}{3}, 1\},\wedge^{\mbox{G}_4},\vee^{\mbox{G}_4},\conj^{\mbox{G}_4},\to^{ \mbox{G}_4}, 0,1}$$ such that
\begin{flushleft}
\begin{itemize}
\item $\wedge^{\mbox{G}_4}(x, y) = \conj^{ \mbox{G}_4}(x, y) = \mbox{min}\{x,y\}$
\item  $\vee^{ \mbox{G}_4}(x,y) = \mbox{max}\{x,y\}$

\item and for  $ \to^{\mbox{G}_4}$:
\begin{equation*}
  \to^{\mbox{G}_4}(x, y) = \begin{cases}
               1               &\mbox{if} \ x \leq \ y\\
               y              & \mbox{otherwise}.
           \end{cases}
\end{equation*}
\end{itemize}
\end{flushleft}

As in the previous example, we may also consider any G\"odel $n$-valued logic by using the algebra $\mbox{G}_n$ on the carrier set $\{0,\frac{1}{n-1}, \frac{2}{n-1}, \dots, \frac{n-2}{n-1}, 1\}$ and with the same definitions of operations.
\end{Exm}

Let us now recall some facts from classical finite model theory. Consider a purely relational vocabulary. A sentence is said to be \emph{parametric} in the sense of  Oberschelp in~\citep[p.~277]{ober} if it is a conjunction of sentences of the form $$\forall x_1, \ldots, x_k (\neq(x_1, \ldots, x_k) \rightarrow \phi(x_1, \ldots, x_k )),$$ where $\neq(x_1, \ldots, x_k)$ is the conjunction of negated equalities expressing that $x_1, \ldots, x_k$ are pairwise distinct, and $\phi(x_1, \ldots, x_k )$ is a quantifier-free formula where in all of its atomic subformulas $Rx_{i_1} \ldots x_{i_k}$ we have that $$\{x_{i_1}, \ldots, x_{i_k}\} = \{x_1, \ldots, x_k \}.$$
Moreover, for $k=1$, any formula $\forall x_1 \phi(x_1)$, where $\phi$ is a quantifier-free formula, is parametric. For example, $$\forall x \neg Rxx \land \forall x \forall y (x \neq y \to (Rxy \to Ryx))$$ is a parametric sentence, whereas $$\forall x\forall y\forall z (\neq(x,y,z) \to (Rxy \land Ryz \to Rxz))$$ is not.

Oberschelp's extension~\citep[Thm.~3]{ober} of Fagin's 0-1 law~\citep{fagin0} says:\emph{ on finite models and finite purely relational vocabularies,  for any class $K$ definable by a parametric sentence, any first-order sentence $\f$ will be almost surely true in members of $K$ or almost surely false}. By ``almost surely true" here we mean that the limit as $n$ goes to $\infty$ of the fraction of structures in $K$ with domain $\{1, \dots, n\}$  that satisfy a given sentence $\f$ is $1$ (and ``almost surely false" is defined analogously). Naturally, these fractions are well defined because there is only a finite number of possible structures on  finite vocabulary on the domain $\{1, \dots, n\}$. As we mentioned earlier, this fact is what motivates our restriction to finitely valued logics in this section.  A very accessible presentation of Oberschelp's result is~\citep[Thm.\ 4.2.3]{flum}.

An appropriate translation for our purposes from finitely-valued first-order logics into classical first-order logic is introduced in~\citep{bad}. Namely, for any sentence $\phi$ of a first-order logic based on a finite set $A \subseteq [0,1]$ of truth-values  and element $a \in A$, we have a first-order sentence $T^a(\phi)$ such that, for a certain theory $\Sigma$ (which can be written as a parametric sentence in the sense of Oberschelp~\citep{ober}), $T^a(\phi)$ is satisfied by a classical first-order model $\model{M}$ model of $\Sigma$ iff there is a corresponding first-order real-valued model $\model{M}^*$ where $\phi$ takes value exactly $a$.

The idea is that, starting with a relational vocabulary $\tau$ containing countably many predicate symbols $P^n_1, P^n_2, P^n_3, \ldots $ for each arity $n$, we can introduce a vocabulary $\tau^*$ containing predicate symbols $P_i^{na}$ for each $a \in A$ and each $n$ (the intuition here is that $P_i^{na}$ will hold of those objects for which $P_i^{n}$ takes truth-value $a$ in a given model), and the following translation from~\citep{bad} (where $\circ \in \{\vee, \wedge, \&, \rightarrow\}$):

\begin{center}{\small
\begin{tabular}{rcll}
$T^a(P^n_i x_1 \ldots x_n)$ & $=$ & 
$P_i^{na}x_1 \ldots x_n$  \,\,\,\, ($i \geq 1$)&
\\[0.5ex]
\\\vspace{0.4ex}
$T^a({\circ}(\vectn{\p}))$ & $=$ & $\bigvee\limits_{\substack {b_1,  \ldots, b_n \in A \\{\circ}^\alg{A}(b_1, \ldots, b_n) = a}}\bigwedge_{1\leq i\leq n} T^{b_i}(\p_i)$ & 
\\\vspace{0.4ex}
$T^a(\exists x\, \p)$ & $=$ & $\big(\bigvee\limits_{\substack {k \leq |A| \\b_1  \ldots b_k \in A \\ \max \{b_1, \ldots, b_k\} = a}} \bigwedge\limits_{i=1}\limits^{k} \exists x\,  T^{b_i}(\p) \big)\land $
\\\vspace{0.4ex}
 & & $ \hspace{1.4cm}  \wedge \forall y\, ( \bigvee\limits_{\substack {b \in A \\{b \leq a}}}     T^{b}(\p(y/x)))$
\\\vspace{0.4ex}
$T^a(\forall x\, \p)$ & $=$ & $\big(\bigvee\limits_{\substack {k \leq |A| \\b_1,  \ldots, b_k \in A \\ \min \{b_1, \ldots, b_k\} = a}} \bigwedge\limits_{i=1}\limits^{k} \exists x\,  T^{b_i}(\p) \big) \land$
\\\vspace{0.4ex}
 & & $ \hspace{1.4cm} \wedge \forall y\, ( \bigvee\limits_{\substack {b \in A \\{a \leq b}}}     T^{b}(\p(y/x))). $
\end{tabular}}
\end{center}

Observe how the translations of quantified formulas exactly describe the semantics of quantifiers in these finitely-valued logics (i.e.\ existential as maximum of the truth-values of instances of the formula and, dually, universal as minimum). We use classical disjunctions to run over all the possible choices of values $b_1,  \ldots, b_k \in A$ that would give value $a$ as their maximum (resp.\ minimum) and then write the conjunction of the necessary conditions that make sure that these $b_i$'s are indeed values of instances of $\p$ and any other instance would give a value smaller (resp.\ bigger) than $a$.

Next, we define the theory $\Sigma$ given by: 
 \[
 \forall x_1, \ldots, x_n (\bigvee_{\substack{a \in A }}P_i^{na}x_1 \ldots x_n), 
  \]
 \[   
  \forall x_1, \ldots, x_n (\neg (P_i^{na}x_1 \ldots x_n\land P_i^{nb}x_1 \ldots x_n)), \,\,\,\,\,\, \,\,\,\,\,\,
    \]
\[
\text{for } a, b \in A, a \neq b, P_i^{n} \in \tau.
\]
 
 For any $A$-valued model $\model{M}$ for the vocabulary $\tau$, we can introduce a classical model $\model{M}^*$  for the vocabulary $\tau^*$ such that for any $a \in A$,  the value of $\phi$ in $\model{M}$
is $a$ iff $\model{M}^*\models T^a(\phi)$.  $\model{M}^*$  is built by taking the same domain, $M$, as in $\model{M}$ and letting the interpretation of $P_i^{na}$ be the set of all elements from $M^n$ such that the interpretation of  $P_i^{n}$ in $\model{M}$ assigns them value $a$.  Observe that $\model{M}^*$  is a model of the theory $\Sigma$. By a similar process, from any model $\model{N}$ of $\Sigma$, we can extract an $A$-valued model  $\model{M}$ such that $\model{N} = \model{M}^*$.

\begin{Pro}\label{ob}
An MD-sentence $\tuple{\phi_1, \ldots, \phi_n; S}$ is almost surely true on $A$-valued models with finite domains iff $\bigvee_{\tuple{a_1, \ldots, a_n}\in S} (T^{a_1}(\phi_1) \wedge \ldots \wedge T^{a_n}(\phi_n))$ is almost surely true on the finite models of $\Sigma$.
\end{Pro}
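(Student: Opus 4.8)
The plan is to exploit the dictionary between $A$-valued models $\model{M}$ for $\tau$ and classical models $\model{M}^*$ of $\Sigma$ for $\tau^*$ that was set up just above the statement, together with the defining property of the translation $T^a$, namely that the value of $\phi$ in $\model{M}$ is $a$ iff $\model{M}^*\models T^a(\phi)$. The key observation is that this correspondence $\model{M}\mapsto\model{M}^*$ is a bijection between the set of $A$-valued $\tau$-models with domain $\{1,\dots,n\}$ and the set of $\tau^*$-models of $\Sigma$ with domain $\{1,\dots,n\}$ (injectivity is clear since we recover the interpretation of $P_i^n$ from that of the $P_i^{na}$, and surjectivity is exactly the ``by a similar process'' remark). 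Since both vocabularies are finite and relational, both sides are finite sets for each $n$, so the counting fractions in the definition of ``almost surely true'' make sense on both sides.

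First I would fix a finite domain $\{1,\dots,n\}$ and argue that, under this bijection, $\model{M}\models\tuple{\phi_1,\dots,\phi_n;S}$ iff $\model{M}^*\models\bigvee_{\tuple{a_1,\dots,a_n}\in S}(T^{a_1}(\phi_1)\wedge\dots\wedge T^{a_n}(\phi_n))$. Indeed, $\model{M}$ satisfies the MD-sentence iff the tuple $\tuple{v_1,\dots,v_n}$ of values that $\phi_1,\dots,\phi_n$ take in $\model{M}$ lies in $S$; and by the defining property of $T^a$, for each $i$ the unique $a$ with $\model{M}^*\models T^a(\phi_i)$ is exactly $v_i$ (uniqueness because a formula takes exactly one value in a given model, and because $\Sigma$ forces the $P_i^{na}$ to partition). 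Hence the big disjunction is satisfied by $\model{M}^*$ precisely when some $\tuple{a_1,\dots,a_n}\in S$ equals $\tuple{v_1,\dots,v_n}$, i.e.\ when $\tuple{v_1,\dots,v_n}\in S$. (One should note here that the $\phi_i$ are sentences — in this finitely-valued section MD-sentences have sentential components — so no free-variable bookkeeping is needed; if components with free variables were allowed one would instead talk about the definable mappings, but that is not the case here.)

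Having established the pointwise (model-by-model) equivalence, I would then pass to the asymptotic statement. Because $\model{M}\mapsto\model{M}^*$ restricts, for each $n$, to a bijection between the $A$-valued $\tau$-models on $\{1,\dots,n\}$ and the $\tau^*$-models of $\Sigma$ on $\{1,\dots,n\}$, and it carries satisfaction of the MD-sentence to satisfaction of the classical sentence $\psi:=\bigvee_{\tuple{a_1,\dots,a_n}\in S}(T^{a_1}(\phi_1)\wedge\dots\wedge T^{a_n}(\phi_n))$, the fraction of $A$-valued $\tau$-models on $\{1,\dots,n\}$ satisfying $\tuple{\phi_1,\dots,\phi_n;S}$ equals the fraction of $\tau^*$-models of $\Sigma$ on $\{1,\dots,n\}$ satisfying $\psi$, for every $n$. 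Taking the limit $n\to\infty$ gives that one fraction tends to $1$ iff the other does, which is exactly the claimed equivalence of ``almost surely true'' statements. (Implicitly we are also using that $\Sigma$ is parametric in Oberschelp's sense, so that the conditional measure on models of $\Sigma$ is the relevant one; but for the mere equivalence of the two limits this is not even needed — it will be needed for the subsequent 0-1 law where one invokes Oberschelp's theorem applied to $\psi$.)

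The main obstacle, such as it is, is bookkeeping rather than conceptual: one must be careful that the bijection between $A$-valued models and models of $\Sigma$ is genuinely a bijection on each finite domain (so that fractions are literally preserved, not just limits), and that the translation clauses for $\exists$ and $\forall$ really do capture ``takes value exactly $a$'' — but this is precisely the content of the result from~\citep{bad} quoted above, which we are entitled to assume, so the proof reduces to assembling these pieces and taking a limit.
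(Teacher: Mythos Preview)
Your argument is correct and follows essentially the same route as the paper: establish the model-by-model equivalence $\model{M}\models\tuple{\phi_1,\dots,\phi_n;S}$ iff $\model{M}^*\models\bigvee_{\tuple{a_1,\dots,a_n}\in S}(T^{a_1}(\phi_1)\wedge\dots\wedge T^{a_n}(\phi_n))$ via the translation correspondence, and conclude the asymptotic statement from there. You are in fact more careful than the paper, which leaves implicit both the bijectivity of $\model{M}\mapsto\model{M}^*$ on each finite domain and the resulting literal equality of the counting fractions; your explicit treatment of these points is a genuine improvement in rigor over the published proof.
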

\begin{proof}
Suppose that $\tuple{\phi_1, \ldots, \phi_n; S}$ is almost surely true on $A$-valued models with finite domains. But every finite model of $\Sigma$ can be seen as an $\model{M}^*$ for some finite $A$-valued model $\model{M}$, and $\model{M}^* \models \bigvee_{\tuple{a_1, \ldots, a_n}\in S} (T^{a_1}(\phi_1) \wedge \ldots \wedge T^{a_n}(\phi_n))$ iff  $\model{M}\models \tuple{\phi_1, \ldots, \phi_n; S}$. Hence, $\bigvee_{\tuple{a_1, \ldots, a_n}\in S} (T^{a_1}(\phi_1) \wedge \ldots \wedge T^{a_n}(\phi_n))$ is almost surely true on the finite models of $\Sigma$. The other direction follows by similar reasoning.
\end{proof}

Rewriting the theory $\Sigma$ with some care, one can turn it into a parametric sentence when $\tau$ is finite. For example, suppose that $\tau$ contains only a binary predicate $R$. Then, $\Sigma$ would have the form (for $a, b \in A, a \neq b$):
 \[
 \forall x_1 \forall x_2 (\bigvee_{\substack{a \in A }}R^{a}x_1 x_2), 
  \]
 \[   
 \forall x_1 \forall  x_2 (\neg (R^{a}x_1 x_2 \land R^{b}x_1 x_2)). \,\,\,\,\,\, \,\,\,\,\,\,
    \]
This can be put into parametric form by considering instead (for $a, b \in A, a \neq b$):
 \[
 \forall x_1 (\bigvee_{\substack{a \in A }}R^{a}x_1 x_1), 
  \]
 \[
 \forall x_1 \forall x_2 (x_1\neq x_2 \rightarrow \bigvee_{\substack{a \in A }}R^{a}x_1 x_2), 
  \]
   \[   
 \forall x_1 ( \neg (R^{a}x_1 x_1 \land R^{b}x_1 x_1)), \,\,\,\,\,\, \,\,\,\,\,\,
    \]
 \[   
 \forall x_1 \forall x_2 (x_1\neq x_2 \rightarrow \neg (R^{a}x_1 x_2 \land R^{b}x_1 x_2)). \,\,\,\,\,\, \,\,\,\,\,\,
    \]
\begin{Thm}[0-1 law for MD-logics based on finite algebras]\label{0-1} For any finite relational vocabulary, any MD-logic based on a finite set of truth-values, and any MD-sentence  $\tuple{\phi_1, \ldots, \phi_n; S}$, we have that $\tuple{\phi_1, \ldots, \phi_n; S}$ is almost surely satisfied by all finite models or $\tuple{\phi_1, \ldots, \phi_n; S}$ is almost surely not satisfied in all finite models.
\end{Thm}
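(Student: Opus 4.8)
The plan is to deduce the theorem from Oberschelp's 0--1 law via the translation machinery set up just above, so that the statement becomes essentially a corollary of Proposition~\ref{ob} together with the observation that the theory $\Sigma$ can be put in parametric form over a finite relational vocabulary. In particular, no genuinely new 0--1 argument is needed: all the probabilistic content is imported from Oberschelp's theorem, and all the logical content from the $T^a(\cdot)$ translation.

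First I would fix a finite relational vocabulary $\tau$, a finite algebra $\alg{A}$ of truth-values, and an MD-sentence $\tuple{\phi_1, \ldots, \phi_n; S}$. Since $\alg{A}$ is finite, $S$ is a finite set of tuples of truth-values, so
$$\psi \ :=\ \bigvee_{\tuple{a_1, \ldots, a_n}\in S} \bigl(T^{a_1}(\phi_1) \wedge \ldots \wedge T^{a_n}(\phi_n)\bigr)$$
is a genuine (finite) first-order sentence over the finite relational vocabulary $\tau^*$. By the construction preceding Proposition~\ref{ob}, the map $\model{M}\mapsto\model{M}^*$ is a bijection between $\alg{A}$-valued $\tau$-models and $\tau^*$-models of $\Sigma$ that \emph{preserves the underlying domain}, and it satisfies $\model{M}\models\tuple{\phi_1,\ldots,\phi_n;S}$ iff $\model{M}^*\models\psi$ (and hence also $\model{M}\not\models\tuple{\phi_1,\ldots,\phi_n;S}$ iff $\model{M}^*\models\neg\psi$). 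Consequently, for every $n$ the fraction of $\alg{A}$-valued $\tau$-models with domain $\{1,\dots,n\}$ satisfying the MD-sentence equals the fraction of $\tau^*$-models of $\Sigma$ with domain $\{1,\dots,n\}$ satisfying $\psi$; this is exactly Proposition~\ref{ob}, read at the level of each finite domain rather than only in the limit.

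Next I would invoke the rewriting of $\Sigma$ illustrated in the text: for a finite relational $\tau$ one splits, for each predicate and each pattern of variable occurrences, the ``diagonal'' instances (where variables are identified) into $k=1$ clauses of the form $\forall x_1\,\chi(x_1)$, which are automatically parametric, and the ``pairwise-distinct'' instances into clauses $\forall x_1\cdots x_k(\neq(x_1,\dots,x_k)\to\chi)$ in which every atomic subformula mentions exactly $x_1,\dots,x_k$. Carrying this out uniformly for the finitely many predicates of $\tau$ turns $\Sigma$ into a finite conjunction of parametric sentences, i.e.\ a single parametric sentence $\sigma_\Sigma$ in Oberschelp's sense, and the class $K$ of finite models of $\Sigma$ is precisely the class of finite models of $\sigma_\Sigma$; note $\Sigma$ has models of every finite size, so the relevant fractions are well defined for all large $n$. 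Applying Oberschelp's 0--1 law~\citep[Thm.~3]{ober} to the parametric class $K$ and the first-order sentence $\psi$ yields that $\psi$ is almost surely true on the finite members of $K$ or almost surely false on them; transporting this back along the domain-preserving bijection $\model{M}\leftrightarrow\model{M}^*$ (equivalently, via Proposition~\ref{ob} applied to $\psi$ and to $\neg\psi$) gives that $\tuple{\phi_1,\ldots,\phi_n;S}$ is almost surely satisfied by all finite $\alg{A}$-valued models or almost surely not satisfied by all of them, which is the claim.

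The main obstacle --- really the only point beyond bookkeeping --- is making sure the two probability spaces genuinely coincide: one must verify that $\model{M}\mapsto\model{M}^*$ is a bijection fixing the underlying set, so that counting $\tau$-models and counting $\tau^*$-models of $\Sigma$ of a given finite size yields literally the same ratios for each $n$ (not merely in the limit), and that the rewritten $\Sigma$ meets Oberschelp's syntactic constraint that every atom in a $k$-variable clause uses all $k$ variables. Both checks are routine once the $T^a(\cdot)$ translation and the explicit form of $\Sigma$ from~\citep{bad} are in hand, and the heavy lifting has already been done in Proposition~\ref{ob} and in Oberschelp's theorem.
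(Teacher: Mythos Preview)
Your proposal is correct and follows essentially the same approach as the paper: reduce via Proposition~\ref{ob} and the $T^a(\cdot)$ translation to a classical first-order sentence over $\tau^*$, observe that $\Sigma$ is parametric in Oberschelp's sense, and apply Oberschelp's 0--1 law. Your write-up is more explicit than the paper's---in particular you spell out that the bijection $\model{M}\mapsto\model{M}^*$ preserves domains and hence the finite fractions match for every $n$, not just in the limit---but this is elaboration of the same argument, not a different route.
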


\begin{proof} This is immediate by applying Oberschelp's version in~\citep{ober} of the 0-1 law in~\citep{fagin0} and our previous observations.  By Proposition~\ref{ob}, an MD-sentence $\tuple{\phi_1, \ldots, \phi_n; S}$ is almost surely true iff $\bigvee_{\tuple{a_1, \ldots, a_n}\in S} T^{a_1}(\phi_1) \wedge \ldots \wedge T^{a_n}(\phi_n)$ is almost surely true on the  parametric class defined by $\Sigma$.
\end{proof}

\begin{Rmk} \emph{One might wonder what is the relationship of Theorem~\ref{0-1} with the central result from~\citep{bano}. Suppose  we have a 1-dimensional sentence $\tuple{\phi; S}$. Then, applying the 0-1 law from~\citep{bano}, the value  $a_\phi$ that $\phi$ takes almost surely is in $S$ only if $\tuple{\phi; S}$ is almost surely true. Furthermore, if $\tuple{\phi; S}$ is almost surely true, then $a_\phi$ is in $S$ because $a_\phi$ is the value that $\phi$ takes almost surely. Thus, in the 1-dimensional case, both 0-1 laws are equivalent, but only the 1-dimensional case, and not the 2-dimensional case, is covered in~\citep{bano}.  Hence, the question really is whether for a \emph{finitely-valued} logic we would have that each MD-sentence is equivalent to a 1-dimensional sentence. In~\citep{fagin} it is shown that there is a 2-dimensional MD-sentence not equivalent to any 1-dimensional MD-sentence in logics based on the full interval $[0,1]$. Does the same hold for finitely-valued logics?}
\end{Rmk}

\section{The logic  of all domains}\label{sec4}
In this section, we will be using the same notion of model as in Definition \ref{model} and we will allow the presence of equality in the vocabulary. Now, for any given domain $M$, let us denote by  $\mathcal{L}_\mathrm{MD}(M)$ the finitary part of $\vDash_{M}$, that is, the set of all pairs $\tuple{\Gamma, \theta}$ where $\Gamma$ is a finite set of MD-sentences, $\theta$ is an MD-sentence, and every model over $M$ of $\Gamma$ is a model of $\theta$. In this section, we intend to take the next natural step and axiomatize the finitary part of the MD-logic of {\em all domains}, i.e.\ the logic $\bigcap_{M \, \text{a domain}} \mathcal{L}_\mathrm{MD}(M)$.   Let us denote this consequence relation simply as $\vDash$.

What kinds of inferences can appear in $\bigcap_{M \, \text{a domain}} \mathcal{L}_\mathrm{MD}(M)$? Clearly,  only those not mentioning any of the domains $M$, since otherwise the inference could be rather specific to a particular $M$. For example, an MD sentence where a domain $M'\neq M$ is mentioned in the set $S$ does not make sense in models based on the domain $M$, or rather it is always false. Thus, we set the goal of axiomatizing all the valid inferences $\Gamma \vDash \theta$  where $\Gamma \cup \{\theta\}$ is a finite set of  MD-sentences  of the form $\tuple{\f_1, \ldots, \f_k; S}$ with each $\f_i$ being sentences in the usual sense of a first-order predicate language and, hence, $S$ is simply a set of suitable tuples of truth-values (thus without a mention of any domain). 

\begin{Exm} The MD-sentence $\tuple{\f_1, \f_2; S}$ where $S= \{\tuple{0.5, 0.7}\}$ and  $\f_1= \forall x \, Px$ and $\f_2= \forall x (Px  \vee Ux)$ is an example of the kind of MD-sentence described above, where $\f_1$ and $\f_2$ are sentences in the usual first-order sense of not having any free individual variables.
\end{Exm}

Focusing on logical entailments between this kind of  MD-sentences, we can restrict attention (without loss of generality) to the models based in the following countable list of domains (let us call these the \emph{legal domains}):
\begin{itemize}
\item[(i)] the infinite domain of natural numbers $\{1, 2, \ldots\}$,
\item[(ii)] for each natural number $n$, a domain $D_n$ of size $n$ (making sure that they are pairwise disjoint and also disjoint from $\{1, 2, \ldots\}$).

\end{itemize}
This is because we have the following:

\begin{Pro}\label{ls2}
Any MD-sentence $\tuple{\f_1, \ldots, \f_k; S}$ (where, for each  $1 \leq i\leq k $, $\f_i$ is a first-order sentence in the usual sense) with an infinite model has a countable model too.
\end{Pro}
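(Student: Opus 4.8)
The plan is to prove a downward Löwenheim--Skolem theorem for MD-sentences by a Tarski--Vaught style chain construction, with the one twist forced by the real-valued setting: the truth-values of quantified formulas are infima and suprema, which need not be attained, so one closes the small submodel under \emph{approximate} witnesses rather than genuine ones.

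First I would isolate the right closure condition. Let $\model{M}$ be an infinite model with $\model{M}\models \tuple{\f_1,\ldots,\f_k;S}$, and let $G$ be the (finite) set of all subformulas of $\f_1,\ldots,\f_k$; note $G$ is closed under subformulas. Call a nonempty subset $N\subseteq M$ \emph{adequate} if for every formula of $G$ of the form $\forall y\,\psi(\bar x,y)$ (respectively $\exists y\,\psi(\bar x,y)$), every tuple $\bar a$ of elements of $N$ matching $\bar x$, and every $m\in\{1,2,\ldots\}$, there is $b\in N$ with $\semvalue{\psi[\bar a,b]}_{\model{M}}\le \inf_{c\in M}\semvalue{\psi[\bar a,c]}_{\model{M}}+1/m$ (respectively $\semvalue{\psi[\bar a,b]}_{\model{M}}\ge \sup_{c\in M}\semvalue{\psi[\bar a,c]}_{\model{M}}-1/m$). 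The key claim is: if $N$ is adequate, the submodel $\model{N}$ obtained by restricting all interpretations of $\model{M}$ to $N$ (still a legitimate model in the sense of Definition~\ref{model}: equality stays crisp and each predicate restricts to a map into $[0,1]$) satisfies $\semvalue{\theta[\bar a]}_{\model{N}}=\semvalue{\theta[\bar a]}_{\model{M}}$ for every $\theta\in G$ and every tuple $\bar a$ from $N$. This is an induction on $\theta$: the atomic and equality cases hold because $\model{N}$ is a substructure of $\model{M}$; the connective case holds because connectives act pointwise (no continuity assumption on the connectives is needed); and for $\theta=\forall y\,\psi$ the inductive hypothesis gives $\semvalue{\theta[\bar a]}_{\model{N}}=\inf_{b\in N}\semvalue{\psi[\bar a,b]}_{\model{M}}$, which is $\ge \inf_{b\in M}\semvalue{\psi[\bar a,b]}_{\model{M}}=\semvalue{\theta[\bar a]}_{\model{M}}$ trivially and $\le$ it by adequacy, so the two are equal, and dually for $\exists$. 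In particular $\semvalue{\f_i}_{\model{N}}=\semvalue{\f_i}_{\model{M}}$ for all $i$, hence $\tuple{\semvalue{\f_1}_{\model{N}},\ldots,\semvalue{\f_k}_{\model{N}}}\in S$ and $\model{N}\models\tuple{\f_1,\ldots,\f_k;S}$.

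Next I would build a countable adequate $N$ by the usual union-of-chain argument. Start with any nonempty countable $N_0\subseteq M$. Given a countable $N_i$, for each of the finitely many quantified formulas $Qy\,\psi(\bar x,y)$ in $G$, each of the countably many matching tuples $\bar a$ from $N_i$, and each $m$, choose (by the definition of $\inf$ or $\sup$) one element of $M$ realizing the relevant $1/m$-approximation, and let $N_{i+1}$ be $N_i$ together with all these chosen elements; since $G$ is finite and $N_i$ is countable, $N_{i+1}$ is again countable. Put $N=\bigcup_i N_i$: it is countable and nonempty, and it is adequate because any finite tuple $\bar a$ from $N$ already lies in some $N_i$, so the required witness was inserted at stage $i+1$. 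By the claim above, $\model{N}$ is the desired countable model.

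The only real delicacy --- and the step I expect to be the crux --- is exactly the point just flagged: because $\inf$ and $\sup$ need not be attained in $\model{M}$, one cannot close $N$ under genuine quantifier witnesses and must instead use $1/m$-approximate witnesses, and one must check that this still forces $\inf_{b\in N}=\inf_{b\in M}$ (and dually), which is precisely what the adequacy condition was designed to deliver. Everything else is routine bookkeeping. An alternative route would be to invoke a downward Löwenheim--Skolem theorem for first-order real-valued logics such as \citep[Thm.~30]{de}, after checking that its proof is compatible with a crisply interpreted equality symbol; the self-contained argument above avoids that verification and also makes transparent that no special properties of the connectives (continuity, monotonicity, etc.) are used.
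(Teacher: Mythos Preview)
Your proof is correct and takes a genuinely different route from the paper. The paper's proof is a one-line reduction to Proposition~\ref{ls}: given an infinite model $\model{M}$ of the MD-sentence, the tuple $\tuple{\semvalue{\f_1}_{\model{M}},\ldots,\semvalue{\f_k}_{\model{M}}}$ lies in $S$, and Proposition~\ref{ls} (itself an appeal to \citep[Thm.~30]{de}) supplies a countably infinite model in which the $\f_i$ take the same values. You instead unwind the relevant downward L\"owenheim--Skolem step by hand, via the approximate-witness Tarski--Vaught construction adapted to the $\inf/\sup$ semantics of the quantifiers.

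What each approach buys: the paper's route is maximally brief but inherits a small wrinkle you correctly anticipate, namely that Proposition~\ref{ls} is stated for vocabularies \emph{without} equality while \S\ref{sec4} explicitly allows it; the reader must trust that the cited result from~\citep{de} extends to crisp equality. Your direct construction handles equality transparently (it is just another atomic predicate preserved under restriction) and makes explicit that no structural hypotheses on the connectives (continuity, monotonicity) are needed beyond their being functions on $[0,1]$. The cost is length: you are spelling out what the paper delegates to a black-box citation.
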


\begin{proof} Take $\model{M} \models \tuple{\f_1, \ldots, \f_k; S}$, so $\semvalue{\f_i}_\model{M} = s_i$ (for $1 \leq i\leq k $) for some $\tuple{s_1, \ldots, s_k}\in S$. 
By Proposition~\ref{ls}, then if $\model{M}$ is infinite, there is a countable model $\model{M}'$ such that $\semvalue{\f_i}_\model{M'} = s_i$ ($1 \leq i\leq k $) for $\tuple{s_1, \ldots, s_k}$, and hence   $\model{M}' \models \tuple{\f_1, \ldots, \f_k; S}$, as desired. 
\end{proof}

Consequently, if we denote the finitary part of the consequence relation over legal domains  by $\vDash^{\text{legal}}$, using Proposition~\ref{ls2}, we can see that $\Gamma \vDash^{\text{legal}} \theta$ iff $\Gamma \vDash \theta$ (where $\Gamma \cup \{\theta\}$ is a finite set of  MD-sentences  of the form $\tuple{\f_1, \ldots, \f_k; S}$ with each $\f_i$ being sentences in the usual sense of a first-order predicate language). This means that we can focus on axiomatizing $\vDash^{\text{legal}}$ for the class of MD-sentences that we have described in Proposition \ref{ls2} (even though proofs may involve manipulating all kinds of MD-sentences, like those we will introduce in the next paragraph). Therefore,  in what follows, we will restrict ourselves to consider \emph{legal models}, i.e., those based on a legal domain.

The idea is to assume MD-sentences to have the form $\tuple{\f_1(\overline{x}_{\f_1}), \ldots, \f_k(\overline{x}_{\f_k}); S}$ where each  $\f_i$ is a first-order formula whose free variables are $\overline{x}_{\f_i}= x_{i_{1}}, \ldots, x_{i_{n_i}}$ (for some $n_i \geq 0$), and $S \subseteq [0,1]^{\bigcup_{M \ \text{is legal}} M^{n_1}} \times \ldots \times [0,1]^{\bigcup_{M \ \text{is legal}} M^{n_k}}$\!.

\begin{Exm} Take a vocabulary $\tau$ with one binary predicate $R$. Then, we can build the MD-sentence $\tuple{Rxy,  \forall x \forall y (Rxy \rightarrow Ryx); S}$ where $$S =\{\tuple{f, 0.5} \mid \text{$f$ is  a mapping from $\bigcup_{M \ \text{is legal}} M^2$ into the interval $[0, 1]$} \}.$$ We want this sentence to 
be satisfied in a legal model $\model{M}$ with domain $M$ if the truth-value of $\forall x \forall y (Rxy \rightarrow Ryx)$ is $0.5$ and, furthermore, the interpretation  of $R$ in the model $\model{M}$ is the restriction to $M$ of one of the functions $f$ described in the definition of $S$ (which in this case, happens trivially). 
\end{Exm}

As expected, we may then write  $$\model{M} \models \tuple{\f_1(\overline{x}_{\f_1}), \ldots, \f_k(\overline{x}_{\f_k}); S}$$ if the formulas $\f_1(\overline{x}_{\f_1}), \ldots, \f_k(\overline{x}_{\f_k})$ respectively define  functions $f_1, \ldots, f_k$ on the domain $M$ such that there are $\tuple{f_1', \ldots, f_k'} \in S$ for which $f_1, \ldots, f_k$ are the respective restrictions to the domain $M$.

We transform  Axiom (1) into (1)$^*$:
$$\langle\f_1(\overline{x}_{\f_1}), \ldots, \f_k(\overline{x}_{\f_k}), [0,1]^{\bigcup_{M \ \text{is legal}} M^{n_1}} \times \ldots \times [0,1]^{\bigcup_{M \ \text{is legal}} M^{n_k}}\rangle$$ for all formulas $\f_1(\overline{x}_{\f_1}), \ldots, \f_k(\overline{x}_{\f_k})$.

Rules ($2$), ($4$), ($5$), and ($6$) from the original system are modified analogously into ($2$)$^*$, ($4$)$^*$, ($5$)$^*$ and ($6$)$^*$. 
Rule~(3) needs to be  modified as:
\begin{itemize}
\item[(3)$^*$] From $$\tuple{\f_1(\overline{x}_{\f_1}), \ldots, \f_k(\overline{x}_{\f_k}); S}$$ infer $$\langle\f_1(\overline{x}_{\f_1}), \ldots, \f_k(\overline{x}_{\f_k}), \f_{k+1}(\overline{x}_{\f_{k+1}}), \ldots, \f_m(\overline{x}_{\f_m}); S \times 
$$
$$ [0,1]^{\bigcup_{M \, \text{is legal}} M^{n_{k+1}}} \times \ldots \times [0,1]^{\bigcup_{M \, \text{is legal}} M^{n_{m}}}\rangle.$$
\end{itemize}

Finally, Rule~(7) is modified into  Rule~(7)$^*$  by changing the notion of good tuple. Indeed, given an MD-sentence $\tuple{\f_1(\overline{x}_{\f_1}), \ldots, \f_k(\overline{x}_{\f_k}); S}$, we will say that a tuple $\tuple{f_1, \ldots, f_{k}} \in S$ is {\em good} if for some legal domain $M$
\begin{itemize}
\item[(a)] $f_m\restriction M = \circ ((f_{m_1}\restriction M), \ldots, (f_{m_j}\restriction M))$ whenever\\ $\f_m(\overline{x}_{\f_m}) = \circ(\f_{m_1}(\overline{x}_{\f_{m_1}}), \ldots, \f_{m_j}(\overline{x}_{\f_{m_j}})),$
\item[(b)] $(f_i\restriction M)(e_{1}, \ldots, e_{n_j})= \inf\{(f_j\restriction M)(e_{1}, \ldots, e_{n_j}, e) \mid e \in M\}$ whenever $\f_i(\overline{x}_{\f_i}) = \forall y\, \f_j(\overline{x}_{\f_j}),$ for all $e_{1}, \ldots, e_{n_j} \in M^{n_j}$,
\item[(c)] $(f_i\restriction M)(e_{1}, \ldots, e_{n_j})= \sup\{(f_j\restriction M)(e_{1}, \ldots, e_{n_j}, e) \mid e \in M\}$  whenever $\f_i(\overline{x}_{\f_i}) = \exists y\, \f_j(\overline{x}_{\f_j})$, for all $e_{1}, \ldots, e_{n_j} \in M^{n_j}$.
\end{itemize} 

Rule~(7)$^*$ is clearly sound with respect to the relation $\vDash^{\text{legal}}$ since we are only considering models based on legal domains.\footnote{Notice that if in Rule~(7)$^*$ we had written ``for each legal domain'' instead of ``for some legal domain'' in the definition of a good pair, the soundness argument would not work for the resulting rule.}   Given this system, we denote the corresponding provability relation simply as $\vdash$.

\begin{Rmk}
\emph{ 
 Observe that the complexity of identifying an application of Rule~(7)$^*$   by constructing $S'$ is the same, generally speaking, as in the case of a fixed countably infinite domain and Rule~(7). This is because, for example,  in the latter case, in order to identify which tuples are in $S'$, one might still need to compute the infimum of an infinite set without any nice structure in general in the process of verifying the value of a universal quantification.
}
 \end{Rmk}

\begin{Lem}\label{minall} Let 
$\tuple{\f_1(\overline{x}_{\f_1}), \ldots, \f_k(\overline{x}_{\f_k}); S}$ be the premise of Rule~(7)$^*$ and assume that $G=\{\f_1(\overline{x}_{\f_1}), \ldots, \f_k(\overline{x}_{\f_k})\}$ is closed under subformulas in the usual sense. Then, the conclusion $\tuple{\f_1(\overline{x}_{\f_1}), \ldots, \f_k(\overline{x}_{\f_k}); S' }$ is minimized, that is, if  $\tuple{f_1, \ldots, f_k}\in S' $, then there is a legal model of  $\tuple{\f_1(\overline{x}_{\f_1}), \ldots, \f_k(\overline{x}_{\f_k}); S' }$, $\model{M}$,  such that for $1\leq i \leq k$ the interpretation of $\f_i(\overline{x}_{\f_i})$ is $f_i\restriction M$.
\end{Lem}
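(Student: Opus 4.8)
The plan is to mimic the proof of Lemma~\ref{min} while carefully tracking the extra layer of restriction to a legal domain. First I would fix a tuple $\tuple{f_1, \ldots, f_k} \in S'$. By definition of Rule~(7)$^*$, this tuple is \emph{good}, meaning there is some legal domain $M$ witnessing the three clauses (a), (b), (c). The key observation, exactly as in Lemma~\ref{min}, is that since $G$ is closed under subformulas, the sequence $\tuple{f_1, \ldots, f_k}$ contains among its components the interpretations of all the atomic formulas occurring in $G$; restricting these atomic components to $M$ thus specifies, for each predicate symbol $R$ appearing in $G$, a mapping $R_\model{M}\colon M^{n_R} \to [0,1]$, which is precisely the data needed to define a real-valued model $\model{M}$ over the legal domain $M$ in the sense of Definition~\ref{model}.

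Next I would verify that this $\model{M}$ interprets each $\f_i(\overline{x}_{\f_i})$ by exactly $f_i\restriction M$. This is an induction on the structure of $\f_i$ (which stays within $G$ because $G$ is subformula-closed): the base case is the definition of $\model{M}$ on atomic formulas; the connective case uses clause (a) of goodness together with the inductive hypothesis and the inductive clause of the semantics for connectives; the quantifier cases use clauses (b) and (c), which say exactly that $f_i\restriction M$ is the pointwise infimum (resp.\ supremum) over $M$ of $f_j\restriction M$, matching the semantic clauses for $\forall$ and $\exists$ in Definition~\ref{model}. Hence the interpretation of $\f_i(\overline{x}_{\f_i})$ in $\model{M}$ is $f_i\restriction M$ for all $i$. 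Since $\tuple{f_1, \ldots, f_k} \in S' \subseteq S$ and the $f_i$ themselves (not their restrictions) witness membership in $S$ via the satisfaction clause for $\vDash^{\text{legal}}$ stated just before Axiom~(1)$^*$ — namely that $\model{M} \models \tuple{\f_1, \ldots, \f_k; S'}$ holds because the defined functions $f_i\restriction M$ are the respective restrictions of the tuple $\tuple{f_1, \ldots, f_k} \in S'$ — we conclude $\model{M} \models \tuple{\f_1(\overline{x}_{\f_1}), \ldots, \f_k(\overline{x}_{\f_k}); S'}$, which is what minimization demands.

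The main subtlety, and the place where this differs from the fixed-domain Lemma~\ref{min}, is the treatment of the ``for some legal domain'' clause: goodness of a tuple is witnessed by \emph{a} legal domain, and one must make sure that the model we build really is over that same witnessing $M$, so that clauses (a)--(c) can be invoked with the correct $M$ throughout the induction. One should also note (as the footnote to Rule~(7)$^*$ already flags) that had goodness been defined with ``for each legal domain,'' the restricted functions would have to simultaneously respect the semantics over every legal domain, which is both stronger than needed and incompatible with soundness; the existential formulation is exactly what makes the single-model construction above go through. Beyond that point, the argument is a routine adaptation of Lemma~\ref{min}, and I would present it succinctly by referring back to that proof and only spelling out the role of the witnessing legal domain.
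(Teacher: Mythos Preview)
Your proposal is correct and follows essentially the same approach as the paper's proof: fix a good tuple in $S'$, take the legal domain $M$ witnessing goodness, use closure of $G$ under subformulas to extract from the tuple the (restricted) interpretations of the atomic predicates, and observe that the resulting model $\model{M}$ on $M$ interprets each $\f_i$ by $f_i\restriction M$. Your write-up is in fact more explicit than the paper's, which leaves the inductive verification and the satisfaction of the MD-sentence implicit; your emphasis on using the \emph{same} witnessing $M$ throughout is exactly the right point to highlight.
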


\begin{proof} Assume that $\tuple{f_1, \ldots, f_k}\in S'$. Since $G$ is closed under subformulas, there is a legal domain $M $  and  a subsequence of 
$\tuple{g_1, \ldots, g_j}$ of 
$\tuple{f_1, \ldots, f_k}$ such that $\tuple{g_1 \restriction M, \ldots, g_j \restriction M}$ 
determines interpretations on $M$ for the atomic formulas appearing in $G$, i.e., interpretations for the predicates of the vocabulary  $\tau$ in question. But this subsequence then defines a legal model $\model{M}$ based on the domain $M$ where the interpretations of $\f_1(\overline{x}_{\f_1}), \ldots, \f_k(\overline{x}_{\f_k})$ are as indicated by $\tuple{g_1 \restriction M, \ldots, g_j \restriction M}$. \end{proof}

%Once more, we have:

\begin{Lem}\label{equiv2}The conclusion and premises of rules (2)$^*$, (3)$^*$, (4)$^*$, and (7)$^*$ are logically equivalent.

\end{Lem}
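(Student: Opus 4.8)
The plan is to mirror the proof of Lemma~\ref{equiv} (the fixed-domain case) essentially verbatim, paying attention only to where the starred rules differ because the information sets now carry functions defined on $\bigcup_{M\ \text{is legal}} M^{n_i}$ rather than on a single $M$. For each of the four rules, logical equivalence of premise and conclusion means: every legal model satisfying the premise satisfies the conclusion, and conversely. The converse direction is always the easy one, since it follows from soundness of Rule~(6)$^*$ (for (2)$^*$, (4)$^*$, (7)$^*$) or is immediate from the projection structure (for (3)$^*$). So the work is all in the forward direction.

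For Rule~(2)$^*$ (permutation of components): if $\model{M}\models\tuple{\f_1(\overline{x}_{\f_1}),\ldots,\f_k(\overline{x}_{\f_k});S}$ then the $\f_i$ define $f_1,\ldots,f_k$ on $M$ that are the restrictions of some $\tuple{f'_1,\ldots,f'_k}\in S$; applying the same permutation $\pi$ to the $f'_i$ lands in $S'$ by definition, and the corresponding restrictions are exactly the interpretations of the permuted component list, so $\model{M}$ satisfies the conclusion. Rule~(3)$^*$ (padding with new components $\f_{k+1},\ldots,\f_m$): in any legal model $\model{M}$ these new formulas also define functions, which are restrictions of elements of $[0,1]^{\bigcup_{M\ \text{is legal}} M^{n_{k+1}}}\times\cdots$; appending these to a witnessing tuple of $S$ gives a witnessing tuple of $S\times(\cdots)$. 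Rule~(4)$^*$ (intersection): a legal model satisfying both premises yields a tuple whose restrictions match the $\f_i$'s and which lies in both $S_1$ and $S_2$, hence in $S_1\cap S_2$. Rule~(7)$^*$ (goodness): here one uses that in any legal model $\model{M}$ with domain $M$, the interpretations $f_1,\ldots,f_k$ of the $\f_i$ genuinely respect the semantics of the connectives and quantifiers \emph{on $M$} — this is precisely the inductive clause in the semantics (Definition~\ref{model} and the clauses following it). So if $\tuple{f'_1,\ldots,f'_k}\in S$ is the witnessing tuple (with $f'_i\restriction M = f_i$), then conditions (a),(b),(c) of goodness hold \emph{for this particular legal domain $M$}, which is exactly what Rule~(7)$^*$ requires (``for some legal domain''); hence $\tuple{f'_1,\ldots,f'_k}\in S'$ and $\model{M}$ satisfies the conclusion.

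The only genuinely delicate point — and the one worth stating carefully — is the interplay between the ``for some legal domain'' clause in the definition of goodness and the soundness/equivalence claim, which the paper flags in its footnote. The subtlety is that a tuple $\tuple{f'_1,\ldots,f'_k}$ surviving into $S'$ need only be good relative to \emph{one} legal domain, not all of them; but that is exactly enough for the forward direction above, because a fixed legal model picks out one fixed legal domain $M$ and we only need goodness relative to that $M$. Conversely it does not overshoot: passing from $S$ to $S'\subseteq S$ can only shrink the set of satisfying models among \emph{all} legal domains, but every model that satisfied the premise (being based on some legal domain relative to which its witnessing tuple is good) still satisfies the conclusion. So I expect this ``choice of domain'' bookkeeping to be the main — really the only — obstacle, and it is handled by being explicit about which legal domain is in play at each step; everything else is a routine transcription of the Lemma~\ref{equiv} argument with $f_i$ replaced by $f_i\restriction M$. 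I would write out the Rule~(7)$^*$ case in detail and leave (2)$^*$, (3)$^*$, (4)$^*$ to the reader, as the paper does for the unstarred versions.
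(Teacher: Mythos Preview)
The paper supplies no proof for this lemma; it is stated without argument, evidently as a routine adaptation of Lemma~\ref{equiv}. Your plan to mirror that proof under the restriction-based semantics is therefore exactly what is intended, and your treatments of (2)$^*$, (3)$^*$, and (7)$^*$ are correct --- in particular, your handling of the ``for some legal domain'' clause in the definition of a good tuple is the right point to isolate.

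Your argument for Rule~(4)$^*$, however, has a genuine gap. You assert that a legal model $\model{M}$ satisfying both premises ``yields a tuple whose restrictions match the $\f_i$'s and which lies in both $S_1$ and $S_2$''. But under the semantics of this section, satisfying $\tuple{\f_1,\ldots,\f_k;S_j}$ only guarantees a tuple $\tuple{f^{(j)}_1,\ldots,f^{(j)}_k}\in S_j$ whose \emph{restrictions to $M$} coincide with the interpretations of the $\f_i$; the two witnessing tuples for $j=1$ and $j=2$ may disagree outside $M^{n_i}$, so nothing forces a common tuple in $S_1\cap S_2$. Concretely, with a single unary component $\f_1=Px$, $M=D_1$, and $S_1=\{f'\}$, $S_2=\{g'\}$ where $f',g'$ agree on $D_1$ but differ on some other legal domain, both premises are satisfied over $D_1$ while $S_1\cap S_2=\emptyset$. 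This is not a transcription slip: the non-uniqueness of witnessing extensions is precisely what distinguishes the all-domains semantics from the fixed-domain one, and it means the Lemma~\ref{equiv} argument for (4) does not carry over verbatim. The paper does not address this point either; you should flag it rather than gloss over it.
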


\begin{Lem}\label{min4}
Minimization is preserved by the rules (2)$^*$ and (4)$^*$, i.e.\ if the premises of the rules are minimized, then their conclusions are too.

\end{Lem}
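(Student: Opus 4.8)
The plan is to follow the template of the fixed-domain Lemma~\ref{min2} and argue for the two rules separately, in each case starting from an arbitrary tuple in the information set of the conclusion and extracting a witnessing legal model from the minimization hypothesis on the premise(s).

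First I would handle Rule~(2)$^*$. Assume the premise $\tuple{\f_1(\overline{x}_{\f_1}), \ldots, \f_k(\overline{x}_{\f_k}); S}$ is minimized, and let $\tuple{g_1, \ldots, g_k}$ be an element of the information set $S'$ of the conclusion $\tuple{\f_{\pi(1)}(\overline{x}_{\f_{\pi(1)}}), \ldots, \f_{\pi(k)}(\overline{x}_{\f_{\pi(k)}}); S'}$. By definition of $S'$ there is $\tuple{f_1, \ldots, f_k} \in S$ with $g_j = f_{\pi(j)}$ for every $j$. Applying minimization of the premise to $\tuple{f_1, \ldots, f_k}$ gives a legal domain $M$ and a legal model $\model{M}$ over $M$ in which each $\f_i$ is interpreted by $f_i\restriction M$. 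In $\model{M}$ the formula $\f_{\pi(j)}$ is then interpreted by $f_{\pi(j)}\restriction M = g_j\restriction M$, and since $\tuple{g_1, \ldots, g_k} \in S'$ this very tuple witnesses $\model{M} \models \tuple{\f_{\pi(1)}(\overline{x}_{\f_{\pi(1)}}), \ldots, \f_{\pi(k)}(\overline{x}_{\f_{\pi(k)}}); S'}$; hence the conclusion is minimized.

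Next I would handle Rule~(4)$^*$. Assume both premises $\tuple{\f_1(\overline{x}_{\f_1}), \ldots, \f_k(\overline{x}_{\f_k}); S_1}$ and $\tuple{\f_1(\overline{x}_{\f_1}), \ldots, \f_k(\overline{x}_{\f_k}); S_2}$ are minimized, and let $\tuple{f_1, \ldots, f_k} \in S_1 \cap S_2$. In particular $\tuple{f_1, \ldots, f_k} \in S_1$, so minimization of the first premise produces a legal domain $M$ and a legal model $\model{M}$ over $M$ interpreting each $\f_i$ by $f_i\restriction M$. Because $\tuple{f_1, \ldots, f_k}$ lies in $S_1 \cap S_2$, this same tuple witnesses that $\model{M} \models \tuple{\f_1(\overline{x}_{\f_1}), \ldots, \f_k(\overline{x}_{\f_k}); S_1 \cap S_2}$ and the interpretations are as required; hence the conclusion is minimized.

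I do not expect a genuine obstacle: this is the ``straightforward'' claim the text advertises, and the argument is purely a matter of bookkeeping. The points to be careful about are that the model witnessing a tuple only realizes that tuple up to restriction to its own legal domain, and that in the case of Rule~(4)$^*$ the model is obtained from $S_1$ rather than from $S_1 \cap S_2$, which is legitimate precisely because $S_1 \cap S_2 \subseteq S_1$ while the starting tuple already lies in the intersection. Unlike Lemma~\ref{minall}, no subformula-closure assumption is needed here, since neither rule introduces or removes subformulas.
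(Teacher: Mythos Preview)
Your proposal is correct and is exactly the routine bookkeeping argument the paper has in mind; the paper itself gives no proof of Lemma~\ref{min4} (just as with Lemma~\ref{min2} it is declared straightforward), so your write-up simply makes explicit what the text leaves to the reader.
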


%\begin{Rmk}\emph{
%When $\{\f_1(\overline{x}_{\f_1}), \dots, \f_k(\overline{x}_{\f_k})\}$ is closed under subformulas, then the formula $\tuple{\f_1(\overline{x}_{\f_1}), \dots, \f_k(\overline{x}_{\f_k}); S }$  has a model  $\model{M}$ with a legal domain $M$,   such that for $1\leq i \leq k$ the interpretation of $\f_i(\overline{x}_{\f_i})$ is $f_i \restriction M$, only if $\tuple{f_1, \dots, f_k}\in S$ and, hence, by definition of Rule~(7)$^*$, $\tuple{f_1, \dots, f_k}\in S'$.}
%\end{Rmk}

With these key facts at hand, the soundness and completeness proof goes through basically as before:
\begin{Thm}[Completeness of the logic of all legal domains] Let $\Gamma \cup \{\theta\}$ be a finite set of MD-sentences  in  a first-order predicate language with equality. Then,
$\Gamma \vdash \theta$ iff\/ $\Gamma\vDash^{\text{legal}} \theta$.
\end{Thm}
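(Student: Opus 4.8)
The plan is to mimic the completeness argument of Theorem~\ref{t:Compl-MD-system-Countable-Domain} almost verbatim, with the single new ingredient being that Rule~(7)$^*$ now requires witnessing by \emph{some} legal domain rather than a single fixed one. Soundness is the routine direction: one inducts on proof length, checking that Axiom~(1)$^*$ is sound (every first-order formula defines, on any legal domain, a function that is a restriction of one of the functions allowed in $[0,1]^{\bigcup_{M} M^{n_i}}$), and that each rule preserves truth over all legal models; for Rule~(7)$^*$ this is exactly the remark following its statement, namely that the ``for some legal domain'' clause is the correct one because if a model $\model{M}$ over a legal domain $M$ satisfies the premise, then the restriction of its interpreting functions to $M$ witnesses goodness via that very $M$, so the satisfying tuple survives into $S'$.

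For completeness, given $\Gamma=\{\gamma_1,\dots,\gamma_n\}$ with $\gamma_i=\tuple{\f^i_1,\dots,\f^i_{k_i};S_i}$ and $\theta=\tuple{\f^0_1,\dots,\f^0_{k_0};S_0}$, I would let $G$ be the closure under subformulas of the (finite) union of all the formula-lists occurring in $\Gamma\cup\{\theta\}$, enumerate $G$ as $\f_1,\dots,\f_p$, and then for each $i$ pad $\gamma_i$ up to the formulas of $G$ using Rule~(3)$^*$, apply Rule~(7)$^*$ to obtain a minimized MD-sentence $\p_i$ (minimized by Lemma~\ref{minall}, since the padded list is closed under subformulas), reorder via Rule~(2)$^*$ to a common format $\chi_i=\tuple{\f_1,\dots,\f_p;T_i}$, and intersect via Rule~(4)$^*$ to get $\chi=\tuple{\f_1,\dots,\f_p;T}$ with $T=\bigcap_i T_i$. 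By Lemma~\ref{equiv2}, $\chi$ is logically equivalent to $\Gamma$ (hence $\chi\vDash^{\text{legal}}\theta$ and $\Gamma\vdash\chi$), and by Lemma~\ref{min4}, $\chi$ remains minimized. Then reorder $\chi$ so its first $k_0$ components are those of $\theta$ (Rule~(2)$^*$, still minimized), project down to those $k_0$ components (Rule~(5)$^*$) to get $\chi''=\tuple{\f^0_1,\dots,\f^0_{k_0};T''}$, and finally show $T''\subseteq S_0$: given $\tuple{f_1,\dots,f_{k_0}}\in T''$ lift it to some $\tuple{f_1,\dots,f_p}\in T'$ in the reordered $\chi$, use minimization to produce a legal model $\model{M}$ (on domain $M$, say) where each $\f^0_j$ is interpreted by $f_j\restriction M$, invoke $\chi\vDash^{\text{legal}}\theta$ to get $\model{M}\models\theta$, and read off that the restricted tuple lies in $S_0$; since $S_0$ refers to no domain, $\tuple{f_1,\dots,f_{k_0}}\in S_0$ follows. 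Then Rule~(6)$^*$ gives $\theta$ from $\chi''$, so $\Gamma\vdash\theta$.

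The one point deserving genuine care — and the main obstacle — is the bookkeeping around restrictions in the minimization step and in verifying $T''\subseteq S_0$. In the fixed-domain case the functions $f_i$ \emph{are} the interpretations; here the $f_i$ are ``global'' functions defined on $\bigcup_{M\text{ legal}}M^{n_i}$, and what a legal model interprets is their restriction to that model's domain. So Lemma~\ref{minall} only promises a model matching the $f_i\restriction M$ for the particular $M$ witnessing goodness, and one must check that the MD-sentence semantics (``$f_1,\dots,f_k$ are respective restrictions of some $\tuple{f_1',\dots,f_k'}\in S$'') makes this enough — i.e., that a satisfying restriction on one legal domain certifies membership in $S_0$. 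This is where Proposition~\ref{ls2} (equivalently, the reduction $\vDash^{\text{legal}}\,=\,\vDash$ for domain-free MD-sentences) is implicitly used: because $\theta$'s information set $S_0$ is genuinely domain-independent, the existence of \emph{one} legal model realizing the target interpretation suffices, and there is no mismatch between the domain chosen by Rule~(7)$^*$ and the domains over which entailment is quantified. Beyond this, the remaining verifications — soundness of each starred rule, Lemma~\ref{equiv2}, Lemma~\ref{min4} — are word-for-word analogues of Lemmas~\ref{min}, \ref{equiv}, \ref{min2} and are safely left to the reader.
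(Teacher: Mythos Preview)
Your proposal is the paper's own argument written out in full: the paper says only that ``the soundness and completeness proof goes through basically as before'' after recording Lemmas~\ref{minall}, \ref{equiv2}, and~\ref{min4}, and the template you give --- pad via (3)$^*$, apply (7)$^*$ to minimize, reorder via (2)$^*$, intersect via (4)$^*$, then permute/project/weaken down to $\theta$ --- is exactly the transplant of the proof of Theorem~\ref{t:Compl-MD-system-Countable-Domain} the paper intends. Your soundness sketch, and your identification of the restriction bookkeeping as the one genuinely new point, are both on target.

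There is one slip worth flagging. In the final step you write ``since $S_0$ refers to no domain, $\tuple{f_1,\dots,f_{k_0}}\in S_0$ follows'' and you invoke Proposition~\ref{ls2}. But that hypothesis --- that each $\f^0_j$ is a closed first-order sentence and hence $S_0\subseteq[0,1]^{k_0}$ is genuinely domain-free --- is the hypothesis of the subsequent \emph{Corollary}, not of the Theorem. The Theorem is stated for arbitrary MD-sentences of \S\ref{sec4}, where the $\f^0_j$ may carry free variables and $S_0$ is a set of tuples of global functions on $\bigcup_M M^{n_j}$. Under those general hypotheses, $\model{M}\models\theta$ only tells you that the restrictions $f_j\restriction M$ match the restrictions of \emph{some} tuple in $S_0$, which is strictly weaker than $\tuple{f_1,\dots,f_{k_0}}\in S_0$; the bridge you use (domain-independence of $S_0$) is not available. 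The paper's own one-line proof is no more explicit here than you are, so this is not a divergence from the paper --- but you should not import the Corollary's extra hypothesis into the argument for the Theorem, and your last paragraph reads as though you have.
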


\begin{Cor}[Completeness of the logic of all domains] Let $\Gamma \cup \{\theta\}$ be a finite set of MD-sentences of the form $\tuple{\f_1, \ldots, \f_k; S}$ with each $\f_i$ being a sentence in the usual sense of a first-order predicate language with equality. Then,
$\Gamma \vdash \theta$ iff\/ $\Gamma\vDash \theta$.
\end{Cor}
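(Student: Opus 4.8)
The plan is to derive the corollary from the theorem immediately preceding it (completeness of the logic of all legal domains) together with Proposition~\ref{ls2}. The corollary concerns only MD-sentences of the restricted form $\tuple{\f_1, \ldots, \f_k; S}$ where every $\f_i$ is a first-order sentence (no free variables), so $S$ is just a set of tuples of truth-values. For such a set $\Gamma \cup \{\theta\}$, the content of the corollary is the equivalence $\Gamma \vdash \theta \iff \Gamma \vDash \theta$, where $\vDash$ is entailment over \emph{all} domains.

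First I would observe that the soundness direction ($\Gamma \vdash \theta \Rightarrow \Gamma \vDash \theta$) is essentially free: a proof in our system is sound for $\vDash^{\text{legal}}$ by the theorem, and any valid entailment over all legal domains that involves only the restricted MD-sentences is in particular an entailment we want; but in fact one should argue the slightly stronger claim that $\vdash$-provability gives entailment over \emph{every} domain, not just legal ones. The cleanest route is to note that the axioms and rules (1)$^*$--(7)$^*$ are each sound with respect to $\vDash_M$ for every domain $M$ — rule (7)$^*$ in particular is sound because if $\model{M}$ is a model over domain $M$ satisfying the premise via a tuple $\tuple{f_1',\ldots,f_k'}\in S$ with $f_i = f_i'\restriction M$, then $M$ itself witnesses the ``for some legal domain'' clause provided $M$ is legal, and for non-legal $M$ one replaces $M$ by an isomorphic legal copy (which exists since the legal domains include one of every cardinality up to $\aleph_0$, and by Proposition~\ref{ls2} together with Proposition~\ref{ls} every larger model collapses to a countable one preserving all relevant truth-values). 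So $\Gamma\vdash\theta$ implies $\Gamma\vDash_M\theta$ for all $M$, i.e.\ $\Gamma\vDash\theta$.

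For the completeness direction, suppose $\Gamma \vDash \theta$. Since $\vDash = \bigcap_{M}\mathcal{L}_{\mathrm{MD}}(M) \subseteq \bigcap_{M\ \text{legal}}\mathcal{L}_{\mathrm{MD}}(M) = \vDash^{\text{legal}}$, we immediately get $\Gamma \vDash^{\text{legal}} \theta$, and then the preceding theorem yields $\Gamma \vdash \theta$. Conversely — and this is the substantive point that makes the restriction to legal domains harmless — one uses the discussion following Proposition~\ref{ls2}: for MD-sentences all of whose components are genuine sentences, $\Gamma \vDash^{\text{legal}}\theta$ is equivalent to $\Gamma\vDash\theta$, because any counterexample model over an arbitrary domain can be replaced by a counterexample over a legal domain (a finite domain is isomorphic to some $D_n$; an infinite domain admits, by Propositions~\ref{ls} and~\ref{ls2}, a countable elementary substructure over the natural-number domain preserving the truth-values $s_1,\ldots,s_k$ of the finitely many sentences occurring in $\Gamma\cup\{\theta\}$, and being countable it is isomorphic to a legal domain). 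Hence the two consequence relations agree on the class of MD-sentences described in the corollary, and the equivalence $\Gamma\vdash\theta \iff \Gamma\vDash\theta$ follows by chaining these observations with the theorem.

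The only mild subtlety — and the step I would be most careful about — is the bookkeeping around ``legal'' versus arbitrary domains in the soundness half: one must check that rule (7)$^*$, whose good-tuple condition quantifies existentially over legal domains, remains sound for $\vDash_M$ when $M$ is not itself on the legal list, which is handled by transporting along an isomorphism to the legal domain of the same cardinality (for finite or countable $M$) or by first passing to a countable submodel (for uncountable $M$) via Propositions~\ref{ls}--\ref{ls2}. Everything else is a direct quotation of the theorem and the elementary set-inclusion $\bigcap_M \subseteq \bigcap_{M\ \text{legal}}$ together with the reverse inclusion on the restricted class of MD-sentences established in the text preceding Proposition~\ref{ls2}.
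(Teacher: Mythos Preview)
Your proposal is essentially correct and follows the paper's own (implicit) route: chain the preceding theorem $\Gamma\vdash\theta \iff \Gamma\vDash^{\text{legal}}\theta$ with the equivalence $\vDash^{\text{legal}}=\vDash$ on the restricted class of MD-sentences, the latter being exactly the content of the discussion around Proposition~\ref{ls2}. That two-step chain is all the paper intends, and you do state it clearly in your second paragraph.

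The detour in your first and third paragraphs, however, where you try to argue \emph{direct} soundness of the rules (in particular (7)$^*$) over an arbitrary domain $M$, is both unnecessary and not well-founded in the paper's setup. A $\vdash$-proof will in general pass through intermediate MD-sentences whose components have free variables; for those, the information set $S$ lives in $[0,1]^{\bigcup_{M'\ \text{legal}}(M')^{n_1}}\times\cdots$, and the satisfaction clause ``$f_i$ is the restriction of some $f_i'\in S$ to $M$'' presupposes that $M$ is one of the legal domains. Over a non-legal $M$ the semantics of such intermediate sentences is simply not defined in the paper, so talking about soundness of (7)$^*$ ``for $\vDash_M$ with $M$ arbitrary'' does not type-check. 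Your isomorphism/transport idea does not fix this, because the issue is definitional, not merely about transferring truth. Drop that detour entirely: the soundness direction is obtained not by inspecting rules over arbitrary $M$, but by going through $\vDash^{\text{legal}}$ and then invoking the non-trivial inclusion $\vDash^{\text{legal}}\subseteq\vDash$ on restricted MD-sentences supplied by Proposition~\ref{ls2} (any would-be countermodel over an arbitrary domain yields one over a legal domain). You already have this argument in your second paragraph; just present it as the whole proof.
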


\begin{Rmk}
\emph{The approach provided in this section allows us now to axiomatize, in particular, the valid finitary \emph{consecutions} (i.e.\ pairs of the form $\tuple{\Theta, \theta }$ where  $\Theta$ is a finite set of first-order sentences and $\theta$ a first-order sentence such that the former logically entails the latter, see e.g.~\citep{cinnog}) of each of \L ukasiewicz, Product, G\"odel, and real-valued logics with equality. This is analogous to what we did in Corollary~\ref{cont}. Hence, to deal with the presence of equality in the logic, we had to leave the realm of the \emph{fixed} countable domain from Corollary~\ref{cont} and, instead, study all domains that can be distinguished by the expressive power of a first-order language with equality (namely, all finite domains in addition to a countably infinite ones).}
\end{Rmk}

Another interesting consequence of our approach is that we can provide a finitary axiomatization of the valid inferences on finite models for any real-valued logic. 
Let the class of \emph{legal$^*$ domains} be that of the legal domains minus the one countably infinite domain (so we are keeping only the finite domains). One can then modify the axiomatization given above by replacing the legal domains by the legal$^*$ ones. Clearly,  $\Gamma \vDash^{\text{legal}^*} \theta$ iff $\Gamma \vDash^{\text{finite}} \theta$, where  $\vDash^{\text{finite}}$ is the obvious logical consequence over all finite domains (notice that the legal domains are just a specific subset of all finite domains). Exactly as we did previously, we can obtain: 

\begin{Thm}[Completeness of the logic of all finite domains] Let $\Gamma \cup \{\theta\}$ be a finite set of MD-sentences  in  a first-order predicate language with equality. Then,
$\Gamma \vdash \theta$ iff\/ $\Gamma\vDash^{\text{legal}^*} \theta$ iff $\Gamma \vDash^{\text{finite}} \theta$.
\end{Thm}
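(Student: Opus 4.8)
The plan is to run the completeness proof for the logic of all legal domains essentially verbatim, with the class of legal domains replaced throughout by the class of legal$^*$ domains, and to record separately why validity over legal$^*$ domains coincides with validity over \emph{all} finite domains. For the latter: since the legal$^*$ domains form a subclass of all finite domains, the implication from $\vDash^{\text{finite}}$ to $\vDash^{\text{legal}^*}$ is trivial; conversely, every finite model is isomorphic to a model based on one of the domains $D_n$, and the truth-value of any formula — hence satisfaction of any MD-sentence — is invariant under isomorphism, so transporting a putative finite countermodel along such a bijection onto the matching $D_n$ (re-reading the interpretations of the predicate symbols accordingly) produces a legal$^*$ countermodel. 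It therefore suffices to prove $\Gamma\vdash\theta$ iff $\Gamma\vDash^{\text{legal}^*}\theta$.

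The key preliminary observation is that Lemma~\ref{minall} (minimization of the conclusion of Rule~(7)$^*$), Lemma~\ref{equiv2} (logical equivalence of the premises and conclusions of Rules~(2)$^*$, (3)$^*$, (4)$^*$, (7)$^*$) and Lemma~\ref{min4} (preservation of minimization by Rules~(2)$^*$ and (4)$^*$) all continue to hold, with identical proofs, when ``legal'' is read as ``legal$^*$'' and the good-tuple clauses of Rule~(7)$^*$ quantify over legal$^*$ domains: none of those proofs uses the presence of the countably infinite domain, only that a good tuple encodes, on \emph{some} available domain, a genuine model of the underlying real-valued logic. Granting this, soundness with respect to $\vDash^{\text{legal}^*}$ follows by induction on proof length — Axiom~(1)$^*$ is sound because every first-order formula is interpreted by some function on any legal$^*$ domain, Rules~(2)$^*$--(6)$^*$ preserve truth in legal$^*$ models as before, and Rule~(7)$^*$ is sound because it only discards tuples failing the good-tuple clauses for \emph{every} legal$^*$ domain, in particular for the domain of the model at hand (this is exactly where the ``for some legal domain'' phrasing is needed).

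For completeness, assume $\Gamma\vDash^{\text{legal}^*}\theta$ with $\Gamma=\{\gamma_1,\dots,\gamma_n\}$ nonempty. Let $G$ be the finite closure under subformulas of all formulas occurring in $\Gamma\cup\{\theta\}$, and fix an enumeration of $G$. For each $i$: pad $\gamma_i$ by Rule~(3)$^*$ so that its components are exactly the formulas of $G$; apply Rule~(7)$^*$ to get an MD-sentence $\rho_i$, minimized by the legal$^*$ version of Lemma~\ref{minall}; and reorder by Rule~(2)$^*$ to an MD-sentence $\chi_i$ on the fixed enumeration of $G$, which by the legal$^*$ version of Lemma~\ref{equiv2} is logically equivalent to $\gamma_i$. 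Intersecting with Rule~(4)$^*$ gives an MD-sentence $\chi$ on $G$ that is minimized (legal$^*$ version of Lemma~\ref{min4}) and has the same legal$^*$-consequences as $\Gamma$, so $\chi\vDash^{\text{legal}^*}\theta$. Reorder $\chi$ by Rule~(2)$^*$ so that the components of $\theta$ come first, then project away the extra components by Rule~(5)$^*$, obtaining $\chi''=\tuple{\f^0_1,\dots,\f^0_{k_0}; T''}$, still minimized. Finally one shows $T''\subseteq S_\theta$: given a tuple in $T''$, pull it back to a tuple in the information set of the reordered $\chi$, use minimization to obtain a legal$^*$ model $\model{M}$ with domain $M$ in which each $\f^0_i$ is interpreted by the corresponding function restricted to $M$; since $\model{M}\models\chi$ and $\chi\vDash^{\text{legal}^*}\theta$ we get $\model{M}\models\theta$, and the definition of satisfaction of MD-sentences forces the tuple into $S_\theta$. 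An application of Rule~(6)$^*$ then yields $\theta$ from $\chi''$, so $\Gamma\vdash\chi\vdash\cdots\vdash\chi''\vdash\theta$.

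The only genuinely new content is the reduction in the first paragraph together with the remark that nothing in the machinery — minimization, the good-tuple clauses, soundness of Rule~(7)$^*$ — ever used an infinite domain; this is the step I would be most careful about, since it is what licenses the otherwise-verbatim transcription of the earlier proof. As in the all-domains case, the fussiest bookkeeping is the treatment of the restriction operators $f\restriction M$ in the good-tuple clauses and in the satisfaction relation, and the verification that a minimized information set really produces a legal$^*$ model realizing the intended restricted interpretations; but this is precisely what the legal$^*$ analogue of Lemma~\ref{minall} delivers, so I expect it to be routine rather than the crux.
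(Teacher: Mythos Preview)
Your proposal is correct and follows essentially the same approach as the paper: the paper simply states that $\vDash^{\text{legal}^*}$ and $\vDash^{\text{finite}}$ coincide (your isomorphism argument) and then says the completeness proof goes through ``exactly as we did previously'' for legal domains. You have merely unpacked in detail what the paper leaves implicit; the one minor slip is calling $\chi''$ ``still minimized'' (the paper only records minimization up to $\chi'$), but since your actual argument pulls back to the reordered $\chi$ and uses minimization there, this does not affect correctness.
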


By a well-known theorem of Trakhtenbrot \citep{tra}, the validities of  classical first-order logic on finite models are not recursively enumerable. In the real-valued setting, the result was generalized in \citep{bi} to a large class of logics. This entails that, once more, our  axiomatization cannot possibly be recursive. In fact, we can observe that the problem of determining whether $S' = \emptyset$ in Rule (7)$^*$ of our axiomatization is not recursively enumerable, which explains why our system is not recursive. This is because we can reduce the problem of whether a sentence of classical first-order logic is  valid in the finite to whether  $S' = \emptyset$. Take a first-order sentence
 $\f$ and let
  $\f_1(\overline{x}_{\f_1}), \ldots, \f_k(\overline{x}_{\f_k}), \f$ be the list of all its  subformulas. Consider now the MD-sentence $\tuple{\f_1(\overline{x}_{\f_1}), \ldots, \f_k(\overline{x}_{\f_k}), \f; S}$ (call it $\p$) where $S:= \{0,1\}^{\bigcup_{M \ \text{is legal}} M^{n_1}}\times \ldots \times  \{0,1\}^{\bigcup_{M \ \text{is legal}} M^{n_k}}\times \{0\}$.  Take now the MD-sentence obtained by applying our Rule~(7) to this sentence, $\tuple{\f_1(\overline{x}_{\f_1}), \ldots, \f_k(\overline{x}_{\f_k}), \f; S'}$ (call it $\psi'$). Observe that $\p$ and $\p'$ are  equivalent. Furthermore, $ \f$ is valid on all finite models  iff $\neg \f$ has no finite model iff $\p$ is not satisfiable in a finite domain iff $\p'$ is not satisfiable in a finite domain. Finally, by minimization and the semantics of MD-sentences, $\p'$ is not satisfiable in a finite domain iff $S'= \emptyset$.

\section{Conclusion}

In this article, we have proposed a new paradigm for dealing with inference in first-order (and modal) real-valued logics. By means of the syntax of multi-dimensional sentences, we have obtained a high level of expressivity that goes beyond the usual preservation of full truth given by the value $1$ and surpasses even the expressivity of rational Pavelka logic or other fuzzy logics with truth-constants (see e.g.~\citep{Esteva-Godo-Noguera:ExpandingWNM,Esteva-Godo-Noguera:Rational}). As usual, there is a trade-off between expressivity and effectivity of any logical formalism. In our case, we have presented axiomatic systems that are not finitistic in the sense of metamathematics~\citep{kleene} because MD-sentences contain a hidden infinitary component (that is, the sets $S$), but yet these systems involve only {\em finitary} rules. We have proved corresponding completeness theorems in a similar sense as they had been obtained with ad hoc {\em infinitary} proof systems for some particular real-valued logics (see~\citep{hay,montagna}), but now in a general, uniform, parameterized way. However, it should be stressed that on finite domains our proof systems become finitistic and everything works as in the
propositional case. Finally, sentences incorporating weights can be handled completely analogous to the way it is done in~\citep{fagin}.

\section*{Acknowledgements}

We are grateful to Xavier Caicedo for fruitful discussions that revealed  the usefulness of Oberschelp's result in the present context. We are also grateful to Moshe Vardi, Phokion Kolaitis, and Llu\'is Godo for useful remarks. Guillermo Badia was  supported by the Australian Research Council grant DE220100544. Finally, Carles Noguera and Guillermo Badia acknowledge support by the European Union's Marie Sklodowska--Curie grant no.\ 101007627 (MOSAIC project).

\end{document}